\setlist[description]{leftmargin=2.62cm, labelindent=1cm}
\newtheorem{theorem}{Theorem}[section]
\newtheorem{definition}[theorem]{Definition}
\newtheorem{lem}[theorem]{Lemma}
\newtheorem{pro}[theorem]{Proposition}
\DeclareMathOperator{\Real}{Re}
\DeclareMathOperator{\Tr}{Tr}
\DeclareMathOperator*{\esssup}{ess\,sup}
\numberwithin{equation}{section}
\newtheorem{example}{Example}
\def \d{\partial}
\def\zbar{\overline{z}}
\def\dbar{\overline{\partial}}
\def\wbar{\overline{w}}
\def\wbar{\overline{w}}
\def\Xbar{\overline{X}}
\def\Xbar{\overline{X}}
\def\kbar{\overline{k}}
\def\CC{{\rm\kern.24em\vrule
width.02em height1.4ex
depth-.05ex\kern-.26em C}}
\def\1bar{\overline{1}}
\def\2bar{\overline{2}}
\def\Omegabar{\overline{\Omega}}
\begin{document}
\title{\bf Sharp pointwise and uniform estimates for $\bar\partial$ \rm}
\author{Robert Xin Dong$^*$, \   Song-Ying Li \  and \ John N. Treuer}
\date{
}
\maketitle

\renewcommand{\thefootnote}{\fnsymbol{footnote}}
\footnotetext{\hspace*{-7mm} 
\begin{tabular}{@{}r@{}p{16.5cm}@{}}
& $^*$Corresponding author\\
& Keywords. $L^2$ minimal solution, canonical solution, $\bar{\partial}$ equation, Cartan classical domain, Bergman kernel, Bergman metric\\
& Mathematics Subject Classification. Primary 32A25; Secondary 32M15, 32W05, 32A36
\end{tabular}}

\begin{abstract}
We use weighted $L^2$-methods to obtain sharp pointwise estimates for the canonical solution to the equation $\bar\partial u=f$ on smoothly bounded strictly convex domains and the Cartan classical domain domains when $f$ is bounded in the Bergman metric $g$. We provide examples to show our pointwise estimates are sharp. In particular, we show that on the Cartan classical domains $\Omega$ of rank $2$ the maximum blow up order is greater than $-\log \delta_\Omega(z)$, which was obtained for the unit ball case by Berndtsson. For example, for IV$(n)$ with $n \geq 3$, the maximum blow up order is $\delta(z)^{1 -{n \over 2}}$ because of the contribution of the Bergman kernel.
Additionally, we obtain uniform estimates for the canonical solutions on the polydiscs, strictly pseudoconvex domains and the Cartan classical domains under stronger conditions on $f$.
\end{abstract}

\section{Introduction}
\indent
The existence and regularity of solutions to the Cauchy-Riemann equation $\dbar u = f$ on a bounded pseudoconvex domain $\Omega$ in $\mathbb{C}^n$ 
is  a fundamental topic  in Several Complex Variables. Since the kernel of $\dbar$ is the set of holomorphic functions, a solution to the Cauchy-Riemann equation 
is not unique if it exists. However, let $A^2(\Omega) := L^2(\Omega) \cap \hbox{ker}(\dbar)$ denote the Bergman space over $\Omega$; then, the solution to $\dbar u=f$ with $u\perp A^2(\Omega)$ is unique, and it is 
called the canonical solution or $L^2$-minimal solution because it has minimal $L^2$-norm among all solutions.  H\"ormander  \cite{Hor65} showed that if $\Omega$ is bounded and pseudoconvex and $f \in L^2_{(0, 1)}(\Omega)$ is $\dbar$-closed, then there exists a solution $u$ that satisfies the estimate
$
\|u\|_{L^2} \leq C\|f\|_{L^2}
$
for some constant $C$ depending only on the diameter of $\Omega$. In view of H\"ormander's result, a natural question is: does there exist a constant $C$ depending only on $\Omega$ such that for any $\dbar$-closed $f \in L^{\infty}_{(0, 1)}(\Omega)$, there exists a solution to $\dbar u = f$ with $\|u\|_{{\infty}} \leq C\|f\|_{\infty}$? If the answer is yes, we say the $\bar\partial$-equation can be solved with  uniform estimates on $\Omega$. An important method for solving the $\bar\partial$-equation is the integral representation for solutions.  In this method, one constructs a differential form $B(z, w)$ on $\Omega \times \Omega$ which is an $(n, n-1)$  form in $w$  such that solutions to 
$\dbar u = f$  can be written as
\begin{equation} \label{kernel}
u(z)=\int_{\Omega}  B(z, w) \wedge f(w).
\end{equation}
The method of integral representation of solutions was initiated by Cauchy, Leray, Fantappi\'{e}, etc. On a smoothly bounded strictly pseudoconvex domain $\Omega$ in $\mathbb{C}^n$, Henkin \cite{Hen70}, Grauert and Lieb \cite{G-L70} constructed integral kernels $B(z, w)$ such that $u$ given by \eqref{kernel} satisfies $\|u\|_{{\infty}} \leq C\|f\|_{\infty}$. Kerzman \cite{Ker71} improved the estimate by showing that $\|u\|_{C^{\alpha}(\Omegabar)} \leq C_{\alpha}\|f\|_{\infty}$ for any $0 < \alpha < \sfrac{1}{2}$. Henkin and Romanov \cite{HR} obtained the sharp estimate $\|u\|_{C^{\sfrac{1}{2}}(\Omegabar)} \leq C\|f\|_{\infty}.$ For more results on strictly pseudoconvex domains, the reader may consult the papers by Krantz \cite{K76}, Range and Siu \cite{R-S72, R-S73} and the books by Chen and Shaw \cite{CS}, Forn\ae ss and Stens\o nes \cite{FS87}, and Range \cite{R86}.
\bigskip

When the class of domains under consideration is changed from strictly pseudoconvex to weakly pseudoconvex, it is no longer possible to conclude in generality the  existence of uniform estimates for $\dbar$.  Berndtsson \cite{Be93}, Forn\ae ss \cite{For86} and Sibony \cite{Sib80} constructed examples of weakly pseudoconvex domains in $\mathbb{C}^2$ and $\mathbb{C}^3$ where uniform estimates for $\bar\partial$ fail.  More strikingly, Forn\ae ss and Sibony \cite{FS93} constructed a smoothly bounded pseudoconvex domain $\Omega \subset \mathbb{C}^2$ such that $\partial \Omega$ is strictly pseudoconvex except at one point, but any solution to $\dbar u = f$ for some given  $\dbar$-closed $f \in L^{\infty}_{(0, 1)}(\Omega)$ does not belong to $L^p(\Omega)$ for any $2 < p \leq \infty$. Range in \cite{R78} gave uniform estimates on bounded convex domains in $\mathbb{C}^2$ with real-analytic boundaries, and in \cite{R90} gave H\"{o}lder estimates on pseudoconvex domains of finite type in $\mathbb{C}^2$.  See  \cite{LTL93, MS99} for related results. Of particular interest is the unit polydisc $\mathbb D^n:=\mathbb D(0,1)^n \subset \mathbb{C}^n$, which is pseudoconvex with non-smooth boundary. When $n=2$, Henkin in \cite{Hen71} showed that there exists a constant $C$ such that
$
\|u\|_\infty \le C \|f\|_\infty
$
for any $f\in C^1_{(0,1)}(\overline{\mathbb D^2})$. Landucci in \cite{L75} obtained the same uniform estimate for the canonical solution on $\mathbb D^2$. Chen and McNeal \cite{CM19}, Fassina and Pan \cite{FP19} generalized Henkin's result to higher dimensions when additional regularity assumptions on $f$ are imposed. It remains open whether uniform estimates hold on $\mathbb{D}^n$ with $n \geq 2$ when $f$ is only assumed to be bounded. See also \cite{F-L-Zh11, GSS19, DPZ} for related results.
\bigskip

A class of pseudoconvex domains in $\mathbb{C}^n$ including $\mathbb D^n$ and the unit ball ${\mathbb B}^n$ are the so-called bounded symmetric domains, which up to biholomorphism are Cartesian product(s) of the Cartan classical domains of types I to IV and two domains of  exceptional types. In \cite[p. 200]{HL84}, Henkin and Leiterer asked whether there exists uniform estimate for the $\bar\partial$-equation on the Cartan classical domains of rank at least two.  Additionally, Sergeev \cite{Se94} conjectured that the $\dbar$-equation cannot be solved with uniform estimates on the Cartan classical domains of type IV of dimension $n\ge 3$.
\bigskip

Let $g = (g_{j\bar{k}})_{j, k = 1}^n$ be the Bergman metric on a domain $\Omega$.  For a $(0, 1)$-form $f=\sum_{j=1}^n f_j d\zbar_j$, one  defines
$$
 \|f\|^2_{g, \infty} :=  \esssup \big\{\sum_{j, k=1}^n g^{j\kbar} (z) f_k (z) \overline{f_j (z)}: z\in \Omega \big\},
$$
where $(g^{j\bar{k}})^{\tau} = (g_{j\bar{k}})^{-1}$ (see \eqref{norm} for details). Berndtsson used weighted $L^2$ estimates of Donnelly-Fefferman type to prove the following pointwise and uniform estimates.
 
 \begin{theorem} [\cite{Be97, Be01}]  
There is a numerical constant $C$ such that for any $\bar\partial$-closed $(0, 1)$-form $f$ on ${\mathbb B}^n$, the canonical solution to $\dbar u=f$ satisfies
 \begin{equation}\label{pointwise}
 |u(z)|\le C\|f\|_{g, \infty}\log {2\over 1-|z|},
 \end{equation} 
 and for any $\epsilon>0$,
 \begin{equation} \label{uniform} 
 \|u\|_\infty \le {C \over \epsilon} \|(1-|z|^2)^{-\epsilon}  f \|_{g, \infty}.
 \end{equation} 
 \end{theorem}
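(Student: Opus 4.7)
My plan is to reproduce Berndtsson's proof by combining H\"ormander-style weighted $L^2$ estimates with the Donnelly--Fefferman twisting trick, using weights tailored to the Bergman geometry of $\mathbb{B}^n$. The natural weight is $\phi(z) = -(n+1)\log(1-|z|^2)$, because $i\partial\dbar \phi$ is a constant multiple of the Bergman metric $g$ and $e^{-\phi}$ agrees with the diagonal of the Bergman kernel up to a constant. The basic weighted H\"ormander inequality for the minimal solution to $\dbar u = f$ in $L^2(e^{-\phi})$ reads
\begin{equation*}
\int_{\mathbb{B}^n} |u|^2 e^{-\phi}\, dV \;\le\; C \int_{\mathbb{B}^n} |f|_g^2 e^{-\phi}\, dV \;\le\; C\|f\|_{g,\infty}^2 \int_{\mathbb{B}^n} (1-|z|^2)^{n+1}\, dV,
\end{equation*}
and the last integral is finite. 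Note that the minimal $L^2(e^{-\phi})$ solution differs from the canonical solution by an element of $A^2(\mathbb{B}^n)$, which can be handled by Bergman reproducing.

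Next, to go from a weighted $L^2$ bound to a pointwise bound at a chosen $z_0 \in \mathbb{B}^n$, I plan to follow Berndtsson by introducing an auxiliary bounded plurisubharmonic weight $\psi_{z_0}$ with $|\dbar \psi_{z_0}|_g^2 \le 1-\eta$ for some $\eta>0$, concentrated near $z_0$ at the Bergman scale. The Donnelly--Fefferman twist then gives
\begin{equation*}
\int_{\mathbb{B}^n} |u|^2 e^{\psi_{z_0} - \phi}\, dV \;\le\; C_\eta \int_{\mathbb{B}^n} |f|_g^2 e^{\psi_{z_0} - \phi}\, dV.
\end{equation*}
Choosing $\psi_{z_0}$ so that $e^{\psi_{z_0}-\phi}\,dV$ approximates a Dirac mass at $z_0$ in a controlled way, together with a sub-mean value inequality in a unit-radius Bergman ball around $z_0$ (applied to $u$ minus a local bounded solution of $\dbar v = f$, which is holomorphic there), converts the weighted $L^2$ bound into the pointwise bound \eqref{pointwise}. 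The logarithmic factor $\log(2/(1-|z_0|))$ enters as the Bergman distance from a fixed base point (say $0$) to $z_0$: the number of unit Bergman balls needed to chain together to reach $\partial \mathbb{B}^n$ from $0$ past $z_0$ is precisely of this order, and each chaining step contributes an $O(\|f\|_{g,\infty})$ increment.

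For the uniform estimate \eqref{uniform}, I would repeat the argument with the weight $\phi$ replaced by $\phi_\epsilon := \phi - 2\epsilon \log(1-|z|^2)$. This amounts to replacing $|f|_g$ on the right-hand side by $|(1-|z|^2)^{-\epsilon} f|_g$ and gives an extra factor $(1-|z|^2)^\epsilon$ that cancels the logarithmic blow-up, at the price of a constant behaving like $1/\epsilon$ from the integrability of $(1-|z|^2)^{-\epsilon}\log(2/(1-|z|))$ as $\epsilon \downarrow 0$.

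The main obstacle is the construction of $\psi_{z_0}$: it must be plurisubharmonic, uniformly bounded in $z_0$, concentrated at the correct Bergman scale near $z_0$, and satisfy the strict gradient bound $|\dbar \psi_{z_0}|_g^2 < 1$. A good candidate is a truncation of $-\log$ of an appropriate Bergman-invariant distance to $z_0$, but verifying plurisubharmonicity after truncation and calibrating the scale so that exactly the $\log(2/(1-|z_0|))$ factor (rather than any polynomial in $1/(1-|z_0|)$) emerges is the delicate point. Once the weight is in hand, separating out the holomorphic component via the Bergman projection and controlling it through the reproducing kernel on $\mathbb{B}^n$ is routine, and the Donnelly--Fefferman inequality and the sub-mean value argument combine to give both \eqref{pointwise} and \eqref{uniform}.
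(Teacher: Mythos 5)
You have the right framework — weighted $L^2$ via the Donnelly--Fefferman twist, with a twist weight $\psi_{z_0}$ depending on the evaluation point, followed by a sub-mean-value step — and the base weight $\phi = -(n+1)\log(1-|z|^2) = \log K(z) + \mathrm{const}$ is indeed the one used. But the construction of $\psi_{z_0}$, which you yourself flag as ``the delicate point,'' is the heart of the argument, and your proposed candidate would not work.

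The paper (following Berndtsson) takes $\psi_a(z) := -\log|K(z,a)|$, which on the ball is $(n+1)\log|1-\langle z,a\rangle| + c_n$. This is \emph{pluriharmonic}, not merely plurisubharmonic, so it enters the Donnelly--Fefferman inequality (Theorem 3.1 of the paper) as the \emph{free} weight, which carries no gradient constraint at all. The gradient bound $|\partial\phi|^2_{i\partial\dbar\phi} \le 1/2$ is required only of the other weight $\phi = \gamma\log K(z)$ (Lemma 2.5). You have these two roles reversed: you demand $|\dbar\psi_{z_0}|_g^2 \le 1-\eta$ and boundedness of $\psi_{z_0}$, neither of which is needed, and your candidate ``truncated $-\log$ of a Bergman-invariant distance to $z_0$'' would not even be plurisubharmonic after truncation, so the construction stalls exactly where the real work is.

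The reason $\psi_a = -\log|K(\cdot,a)|$ is the right choice is that both sides of the Donnelly--Fefferman estimate become Bergman-kernel integrals: $\int_{B_a(\epsilon)} e^{\psi_a}\,dv = \int_{B_a(\epsilon)} |K(z,a)|^{-1}\,dv_z \approx v(B_a(\epsilon))/K(a) \approx v(B_a(\epsilon))^2$ by Proposition 2.3, while $\int_\Omega e^{-\psi_a}\,dv = \int_\Omega |K(z,a)|\,dv_z \approx \log\frac{2}{1-|a|}$ by the Forelli--Rudin estimate. So the logarithmic factor in \eqref{pointwise} appears \emph{directly} as the integral $\int_\Omega|K(z,a)|\,dv_z$; there is no ball-chaining argument, and making your chaining heuristic rigorous with the correct exponent would be substantial independent work. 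The passage from the twisted $L^2$ bound to the $L^1$-average over $B_a(\epsilon)$ is also done differently than you sketch: by a duality step (Lemma 3.2, used in Theorem 3.3), writing $\int_{B_a(\epsilon)}|u|\,dv = \int_\Omega u_0\,\overline{(\beta - P\beta)}\,dv$ with $\beta = \chi_{B_a(\epsilon)}u/|u|$ and $u_0$ the twisted-minimal solution, then applying Cauchy--Schwarz to each piece; your ``subtract a local bounded solution, then sub-mean-value'' mechanism corresponds only to the final Cauchy--Pompeiu step (Proposition 2.1). Your suggestion for \eqref{uniform} — multiplying the weight by $(1-|z|^2)^{-2\epsilon}$ — is reasonable and in the spirit of Section 4, but it too requires having the correct $\psi_a$ in place first.
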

 
The estimate \eqref{pointwise} is sharp. If $f(z) := \sum_{k=1}^n z_k(|z|^2 - 1)^{-1} d\bar{z}_k$ then $f$ is $\dbar$-closed, $\|f\|_{g, \infty} = 1$ and the canonical solution to $\dbar u = f$ is $u= \log(1 - |z|^2) - C_n$, 
which shows the sharpness of  \eqref{pointwise}.
Berndtsson \cite{Be01} also pointed out his proof should generalize to other domains when enough information about the Bergman kernel is known.  Berndtsson's result \cite{Be97, Be01} was improved by Schuster and Varolin in \cite{SV14} via the ``double twisting" method. 
\medskip

Motivated by Berndtsson's results \eqref{pointwise} and \eqref{uniform} and the problems raised by Henkin and Leiterer \cite{HL84} and Sergeev \cite{Se94}, in this paper we study sharp pointwise estimates for $\dbar u = f$ for any $\dbar$-closed $(0, 1)$-form $f$ with $\|f\|_{g, \infty}<\infty$ and uniform estimates under stronger conditions on $f$.  We generalize Berndtsson's results from ${\mathbb B}^n$ to smoothly bounded strictly pseudoconvex domains and the Cartan classical domains. Our main theorem (Theorem \ref {main}/\ref{key}) for pointwise estimates is stated as follows.
 
\begin{theorem} \label{main} Let $\Omega$ be a smoothly bounded strictly convex domain, a Cartan classical domain, or the polydisc, whose Bergman kernel and metric are denoted by $K$ and $g$, respectively. Then there is a constant $C$ such that for any $\bar{\partial}$-closed $(0,1)$-form $f$ with $\|f\|_{g, \infty}<\infty$, the canonical solution to $\dbar u=f$ satisfies
\begin{equation}\label{pointwise!} 
|u(z)|\le C\|f\|_{g, \infty} \int_\Omega |K(z,w)|  dv_w, \quad z\in \Omega.
\end{equation}
\end{theorem}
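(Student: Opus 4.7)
I plan to adapt Berndtsson's twisted $L^2$-method, originally developed for the unit ball, to each of the three domain classes. Fix $z_0 \in \Omega$. For any solution $v$ of $\dbar v = f$, the canonical solution satisfies $u = v - P v$ where $P$ is the Bergman projection, so
\[
u(z_0) \;=\; v(z_0) \;-\; \int_\Omega K(z_0, w)\, v(w)\, dv_w.
\]
The task is to produce a convenient $v$ and then bound both terms on the right by $C\|f\|_{g,\infty}\int_\Omega |K(z_0,w)|\,dv_w$.

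To produce $v$, I apply Berndtsson's twisted Bochner--Kodaira--H\"ormander estimate with a weight $\phi = \phi_{z_0}$ of the form $\phi_{z_0}(w) = -\log|K(z_0, w)| + \mu(w)$, where $\mu$ is a plurisubharmonic correction whose complex Hessian controls the Bergman metric $g$. The piece $-\log|K(z_0,\cdot)|$ is pluriharmonic in its second argument and so contributes nothing to the curvature, but it injects the factor $|K(z_0,w)|$ into the weighted estimate through $e^{-\phi} = |K(z_0,w)|\,e^{-\mu}$. With a careful choice of $\mu$ (and, if necessary, a limiting argument to kill its effect asymptotically) I expect to obtain a solution $v$ with
\[
\int_\Omega |v(w)|^2\, |K(z_0, w)|\, dv_w \;\leq\; C\, \|f\|^2_{g,\infty}\, \int_\Omega |K(z_0, w)|\, dv_w.
\]
The concrete form of $\mu$ depends on the case: a power of $-\rho$ for a defining function $\rho$ on a smoothly bounded strictly convex domain; a sum of one-variable potentials on the polydisc; and an explicit potential built from the Jordan triple structure on a Cartan classical domain.

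With the weighted $L^2$ bound in hand, I use the Cauchy--Schwarz split
\[
\left|\int_\Omega K(z_0, w)\, v(w)\, dv_w\right|^2 \;\leq\; \left(\int_\Omega |K(z_0, w)|\, dv_w\right)\!\left(\int_\Omega |K(z_0, w)|\, |v(w)|^2\, dv_w\right),
\]
which, combined with the preceding estimate, yields $\bigl|\int K(z_0,w) v(w)\, dv_w\bigr| \leq C\|f\|_{g,\infty} \int |K(z_0,w)|\, dv_w$. The pointwise term $|v(z_0)|$ is controlled by a sub-mean-value inequality over a Bergman metric ball of bounded radius around $z_0$, after a small local modification of $v$ if necessary.

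The hard part will be that $-\log|K(z_0,\cdot)|$ is only pluriharmonic, so a direct application of H\"ormander's estimate is degenerate and the plurisubharmonic correction $\mu$ cannot be freely discarded: it must be chosen so that its contribution reduces to a bounded multiplicative constant on both sides. This requires precise control of the Bergman kernel's off-diagonal behavior, and it is exactly this information -- Fefferman's asymptotic expansion for strictly convex domains, the product factorization on the polydisc, and the explicit formulas via the Jordan triple structure on Cartan classical domains -- that singles out the three classes named in the theorem. The verification of this weight-removal step is the principal technical obstacle, and the rest of the proof is a largely mechanical combination of the ingredients above.
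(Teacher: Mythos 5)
Your high-level plan tracks the paper's own argument fairly closely: produce a weighted $L^2$ solution $v$ with a weight built from $|K(z_0,\cdot)|$, write the canonical solution as $u = v - Pv$, bound $(Pv)(z_0)$ by Cauchy--Schwarz against $|K(z_0,\cdot)|$, and bound $v(z_0)$ by a local averaged Cauchy integral. (The paper implements the same idea through the dual pairing $\int_{B_a(\epsilon)}|u|\,dv = \int u_0\,\bar\beta - \int u_0\,\overline{P\beta}$ rather than a pointwise $v - Pv$, but the two are transposes of one another.) However, the step you yourself single out as the ``principal technical obstacle'' is left open, and your framing of it points in the wrong direction.

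You describe a \emph{single} weight $\phi_{z_0} = -\log|K(z_0,\cdot)| + \mu$ and propose applying H\"ormander's $L^2$ estimate with it, then removing $\mu$ by a limiting argument. This does not close. Since $-\log|K(z_0,\cdot)|$ is pluriharmonic, the only curvature comes from $i\d\dbar\mu$, and H\"ormander's inequality then carries the factor $e^{-\mu}$ on \emph{both} sides of the $L^2$ estimate; $\mu$ is an unbounded plurisubharmonic function, so no limit ``kills'' it without destroying the bound. The mechanism that actually makes this work is the Donnelly--Fefferman estimate (Theorem~\ref{DF}): one uses \emph{two} separate weights, a plurisubharmonic $\psi$ appearing only in the measure $e^{-\psi}\,dv$, and a strictly plurisubharmonic $\phi$ that provides the curvature for $|f|^2_{i\d\dbar\phi}$ \emph{and} satisfies the self-bounded-gradient condition $|\d\phi|^2_{i\d\dbar\phi}\le \tfrac12$. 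Taking $\psi = -\log|K(z_0,\cdot)|$ and $\phi = \gamma\log K$ for small $\gamma$ yields directly
\[
\int_\Omega |v|^2\,|K(z_0, w)|\,dv_w \;\le\; C\,\|f\|_{g,\infty}^2 \int_\Omega |K(z_0,w)|\,dv_w,
\]
with no leftover $\mu$-factor. The reason the theorem is stated for bounded convex homogeneous domains, smoothly bounded strictly convex domains, and the polydisc is that these are exactly the settings in which $\gamma\log K$ (for small $\gamma$) is known to satisfy the self-bounded-gradient condition and $\int_\Omega e^{\gamma\log K}\,dv<\infty$ (Lemma~\ref{finite}, resting on Donnelly's and Ishi's results, and for Cartan domains on the explicit computation in Section~6). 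Without identifying the Donnelly--Fefferman structure and Lemma~\ref{finite}, your weight-removal step remains genuinely open.

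A smaller point: a sub-mean-value inequality does not apply to $v$, which is neither holomorphic nor plurisubharmonic. You need the Cauchy--Pompeiu inequality (Proposition~\ref{Cauchy--Pompeiu}): $|v(z_0)| \le C\oint_{B_{z_0}(\epsilon)}|v|\,dv + C\|\dbar v\|_{g,\infty, B_{z_0}(\epsilon)}$. The $L^1$ average is then handled by Cauchy--Schwarz against $|K(z_0,\cdot)|^{\pm 1}$ together with the comparability $|K(z_0,w)|\approx K(z_0)\approx v(B_{z_0}(\epsilon))^{-1}$ on $B_{z_0}(\epsilon)$ (Proposition~\ref{similar}), and the $\dbar v = f$ term is controlled by $\|f\|_{g,\infty}$. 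The ``small local modification of $v$'' you mention is unnecessary once this formula is used.
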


\noindent{}{\bf Remarks.}

\begin{enumerate}[label=(\roman*)]

\item When $\Omega$ is a smoothly bounded strictly pseudoconvex domain, by Fefferman's asymptotic expansion for the Bergman kernel 
$$
\int_\Omega |K(z,w)| dv_w \approx  C \log { 1\over \delta_\Omega(z)} \approx \log K(z,z) ,\quad z\to \d \Omega. 
$$

In this case, the estimate (\ref{pointwise!}) is sharp. Take for example $\Omega=\mathbb{B}^n$, $u(z)=\log K(z,z)-c$ where $c$ is chosen so that $P[u]=0$.

\item  We will show in Section 3, Theorem \ref{pt-spseudo},  that  if $\Omega$ is a smoothly bounded strictly pseudoconvex domain, then \eqref{pointwise!} holds for a solution $u$ which may not be canonical.

\item When $\Omega$ is the unit polydisc $\mathbb{D}^n$, one has
$$
 \int_\Omega |K(z,w)| dv_w
 \approx  \prod_{j=1}^n  \log {2 \over 1-|z_j|}  ,\quad z\to \d \Omega.
$$

\item When $\Omega$ is a Cartan classical domain of rank greater than or equal to 2, the blow up order of  $\int_\Omega |K(z,w)| dv_w$ depends on the direction in which $z$ approaches $\partial \Omega$ and it may be larger than $-\log \delta_\Omega(z)$. For example, if $z=t I_2 \in \Omega:= $ II$(2)$, then $\int_\Omega| K(z, w)| dv_w\approx \delta_\Omega(z)^{-1 \over 2}$ as $t \to 1^-$. Moreover, if $z=t e_1\oplus t e_2\in $ IV$(n) $ with $e_j\in {\mathcal U}$ 
and $n\ge 3$
then  $\int_\Omega| K(z, w)| dv_w\approx \delta_\Omega(z)^{1 -{n\over 2}}$ as $t \to 1^-$. Here $\mathcal U$ denotes the characteristic boundary of $\Omega$.

\item In Section 6.2, we show the estimate \eqref{pointwise!} is sharp on the  Cartan classical domains.
\end{enumerate}

 Our main theorem for uniform estimates is stated as follows, as a combination of Theorems \ref{poly} and \ref{uni-spsi}.
 
 \begin{theorem} \label{main!} Let $\Omega$ be either the unit polydisc or a smoothly bounded strictly convex domain, whose Bergman kernel and metric are denoted by $K$ and $g$, respectively. Then for any $p \in (1, \infty)$, there is a  constant $C$ such that for any $\bar{\partial}$-closed $(0,1)$-form $f$,  the canonical  solution to  $\dbar u=f$ satisfies
$$
\|u\|_\infty \le C\Big\| f(\cdot )\big(\int_\Omega |K(\cdot, w)|dv_w \big)^p \Big \|_{g, \infty}.
$$
\end{theorem}

For Cartan classical domains, we give a uniform estimate under  condition \eqref{RHS} in Theorem \ref{uniform classical}. 
\bigskip

This paper is organized as follows. In Section 2, we recall and prove some properties of the Bergman kernel and metric which will be used later. In Section 3, we use $L^2$-methods to establish pointwise estimates on strictly pseudoconvex domains and the Cartan classical domains. In Sections 4 and 5, we obtain uniform estimates on the polydiscs, strictly pseudoconvex domains and the Cartan classical domains under various conditions on $f$.  In Section 6, we verify the sharpness of our pointwise estimates on the Cartan classical domains; in particular, on IV$(n)$ with $n\ge 3$ we show the estimate  has maximum  blow up order $\delta^{1-\frac{n}{2}}(z)$.

\section{Bergman kernel and metric} 

The Bergman space $A^2(\Omega)$ on a domain $\Omega \subset \mathbb C^n$ is the closed holomorphic subspace of $L^2(\Omega)$. The Bergman projection is the orthogonal projection $P_{\Omega}: L^2(\Omega) \to A^2(\Omega)$ given by 
$$
P_{\Omega}[f] (z)=\int_\Omega K (z,w) f(w) dv(w),
$$
where $K(z, w)$ is the Bergman kernel on $\Omega$ and $dv$ is the Lebesgue $\mathbb{R}^{2n}$ measure.  We will write $K(z)$ to denote the on diagonal Bergman kernel $K(z,z)$.
 When $\Omega$ is bounded, the complex Hessian of $\log K(z)$ induces the Bergman metric $B_\Omega (z; X)$ defined by
$$ 
B_{\Omega}(z; X) := \left(\sum_{j,k=1}^n g_{j\kbar} X_j\Xbar_k \right)^{1 \over 2} ,\quad g_{j\kbar}(z):= \frac{\partial^2}{\partial z_j \partial \zbar_k } \log K (z), \quad \text{for} \, \, z \in \Omega,\ X \in \mathbb C^n.
$$
The Bergman distance between $z, w \in \Omega$ is
$$
\beta_\Omega(z, w) := \inf \left\{\int_0^1 B_\Omega(\gamma(t); \gamma'(t)) dt\right\},
$$
where the infimum is taken over all piecewise $C^1$-curves $\gamma: [0,1] \to \Omega$ such that $\gamma(0) = z, \gamma(1) = w$.  Throughout the paper, 
\begin{equation} \label{hyperbolic}
B_a(\epsilon) := \{z\in \Omega: \beta_{\Omega} (z,a) \leq \epsilon\}
\end{equation}
will denote the hyperbolic ball in the Bergman metric centered at $a \in \Omega$ of radius $\epsilon$.  Additionally, $K(z, w), P_{\Omega}$ and $g$ will always denote the Bergman kernel, Bergman projection on $\Omega$ and the Bergman metric respectively.
\newline
\par

Consider a convex domain $\Omega$ that contains  no complex lines and $a \in \Omega$.  Choose any $a^{1} \in \partial \Omega$ such that $\tau_1(a) := |a - a^1| = \hbox{dist}(a, \partial \Omega)$ and define $V_1 = a + \hbox{span}(a^1 - a)^{\perp}$.  Let $\Omega_1 = \Omega \cap V_1$ and choose any $a^2 \in \partial \Omega_1$ such that $\tau_2(a) := ||a - a^2|| = \hbox{dist}(a, \partial \Omega_1)$.  Let $V_2 = a + \hbox{span}(a^1 - a, a^2 - a)^{\perp}$ and $\Omega_2 = \Omega \cap V_2.$ Repeat this process to obtain $a^1,..., a^n, w_k = {a^k - a \over ||a^k - a||}, 1 \leq k \leq n$. Define
\begin{equation} \label{Embedded Polydisk}
D(a; w, r) = \{z \in \CC^n: |\langle z - a, w_k \rangle | < r_k, 1 \leq k \leq n\}
\end{equation}
and
$$
D(a, r)= \{z \in \CC^n: |z_k - a_k| < r_k, 1 \leq k \leq n\}.
$$
By \cite[Theorem 2]{NPf03}, for convex domains that contain no complex lines, the Kobayashi metric and the Bergman metric are comparable. It follows by \cite[Corollary 2]{NT15}  that if $\Omega$ is a convex domain with no complex lines, then for every $\epsilon > 0$
there exists constants $C_1$ and $C_2$ such that for any $a$,
\begin{equation} \label{Comparable Balls} 
D(a; w, C_1\tau(a))\subset B_a(\epsilon) \subset D (a; w, C_2\tau(a)).
\end{equation}
By \cite[Theorem 1]{NPf03},
$$ 
{1 \over 4^n} \leq K(a)\prod_{j=1}^n \pi \tau_j^2(a) \leq {(2n)! \over 2^n},
$$
which implies that
$$
\left ( {C_1 \over 2} \right )^{2n} \leq K(a)v(B_a(\epsilon)) \leq (2n)! \left ( {C_2^2 \over 2} \right )^n.
$$

For any  open subset $A$ of $\Omega$, we define
$$
\|\dbar u\|_{g, \infty, A}=\big{\|} |\dbar u(z)|_{g} \big{\|}_{L^\infty (A)}.
$$
In the proofs of this paper, $C$ will denote a numerical constant, which may be different at each appearance. 
   The Cauchy--Pompeiu formula gives the following useful proposition.

\begin{pro} \label{Cauchy--Pompeiu}
Let $\Omega$ be a bounded convex domain.  For any $\epsilon > 0$ sufficiently small, there exists a constant $C$ so that for any complex-valued $C^1$ function $u$ on $\Omega$,
$$
|u(a)| \leq C \oint_{B_a(\epsilon)}|u(z)|dv_z + C\|\dbar u\|_{g, \infty, B_a(\epsilon)}.
$$

\end{pro}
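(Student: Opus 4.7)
The plan is to apply the Cauchy--Pompeiu integral representation on an embedded polydisk sitting inside $B_a(\epsilon)$, and to convert the resulting distinguished-boundary integral into a volume average over $B_a(\epsilon)$ by averaging over the polydisk radii. Fix $a\in\Omega$ and let $\{w_j\}_{j=1}^{n}$, $\tau_j(a)$ be the orthonormal frame and associated distances constructed just before \eqref{Comparable Balls}. In the coordinates $z'_j := \langle z-a, w_j\rangle$ the embedded polydisk $P := D(a;w,R)$ with $R_j := C_1 \tau_j(a)$ becomes a standard product polydisk $\{|z'_j|<R_j\}$; by the left inclusion in \eqref{Comparable Balls}, $P\subset B_a(\epsilon)$, and by the volume estimate following \eqref{Comparable Balls}, $\prod_j R_j^2 \asymp v(B_a(\epsilon))$.

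For any radius vector $r$ with $R_j/2\le r_j\le R_j$, I would iterate the one-variable Cauchy--Pompeiu formula in the variables $z'_j$ (after first regularizing $u$ by convolution and passing to the limit at the end). The naive iteration produces terms containing mixed second derivatives $\partial^2 u/\partial\bar z'_i\partial\bar z'_j$, but each such term can be rewritten via Stokes' theorem---using that $1/z'_i$ is holomorphic in $z'_i$---as a lower-dimensional integral containing only one factor of $\partial u/\partial\bar z'_j$. The upshot is a representation
\[
u(a) \;=\; \frac{1}{(2\pi i)^n}\int_{T(r)}\frac{u(z)\,dz'_1\wedge\cdots\wedge dz'_n}{\prod_{j=1}^n z'_j} \;+\; E(r),
\]
where $T(r)=\{|z'_j|=r_j \text{ for all } j\}$ and $E(r)$ is a finite sum whose terms each contain a single first-order derivative $\partial u/\partial\bar z'_j$ integrated against $1/z'_j$ over $|z'_j|<r_j$, with the remaining coordinates integrated along circles or full disks. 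Averaging the displayed identity over $r \in \prod_j [R_j/2, R_j]$ and switching to polar coordinates $z'_j = \rho_j e^{i\theta_j}$ converts the distinguished-boundary integral into a weighted volume integral over the annular polydisk $\{R_j/2\le|z'_j|\le R_j\}\subset B_a(\epsilon)$, which, using $\prod_j R_j^2\asymp v(B_a(\epsilon))$, is bounded by $C\oint_{B_a(\epsilon)}|u|\,dv$.

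For the $\dbar u$ contribution $E(r)$, the key input is that in the $w_j$-frame the Bergman metric $g$ is comparable to $\mathrm{diag}(\tau_j(a)^{-2})$ uniformly on $B_a(\epsilon)$; equivalently, $g^{j\bar k}\asymp \tau_j(a)\tau_k(a)\delta_{jk}$. Together with the definition of $\|\cdot\|_{g,\infty}$ in \eqref{esssup}, this gives the pointwise bound $|\partial u/\partial\bar z'_j(z)|\le C\,\|\dbar u\|_{g,\infty,B_a(\epsilon)}/\tau_j(a)$ for $z\in B_a(\epsilon)$; combined with the elementary polar calculation $\int_{|z'_j|<r_j}|z'_j|^{-1}\,dv_{z'_j}=2\pi r_j\le C\tau_j(a)$, this forces every term of $E(r)$ to be bounded by $C\,\|\dbar u\|_{g,\infty,B_a(\epsilon)}$ uniformly in $r$. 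Adding the boundary and interior estimates yields the proposition. The principal obstacle is establishing the diagonal comparability of the Bergman metric with $\mathrm{diag}(\tau_j(a)^{-2})$ uniformly across $B_a(\epsilon)$ (rather than only at $a$); this follows from the Kobayashi--Bergman comparability of Nikolov--Pflug \cite{NPf03} already invoked in \eqref{Comparable Balls}, together with the stability $\tau_j(z)\asymp\tau_j(a)$ for $z\in B_a(\epsilon)$.
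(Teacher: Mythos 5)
Your proposal is essentially the same as the paper's: iterate the one‑variable Cauchy--Pompeiu formula in the frame coordinates $\langle z-a,w_j\rangle$ on the embedded polydisk $D(a;w,C_1\tau(a))\subset B_a(\epsilon)$, average the circle integral into a volume average, and bound the $\dbar$ terms via the comparability of the Bergman metric with $\mathrm{diag}(\tau_j(a)^{-2})$ (which, as you note, rests on \eqref{Comparable Balls} and Nikolov--Pflug). The only real difference is that the paper iterates the one‑variable \emph{inequality} (taking absolute values at each stage, so each step produces only a volume average plus a single first‑order $\dbar$ term), which makes the mixed second derivatives you anticipate never appear and renders your regularization and Stokes repair step unnecessary.
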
 

\begin{proof} 
After a complex rotation, without loss of generality, using the notation of (\ref{Embedded Polydisk}), we may assume the standard basis for $\mathbb{C}^n$ is  $(w_k)_{k=1}^n$.  Let $r_k(a) = C_1\tau_k(a)$, where $C_1$ is the same constant  as in \eqref{Comparable Balls}. 
Define the metric
$$
M_{A}(z; X) = \left({\sum_{k=1}^n {|X_k|^2 \over \tau_{k}(z)^2}}\right)^{1 \over 2}, \quad  X \in \mathbb{C}^n.
$$
It was proved in \cite {Mc3} (see also \cite{Mc, NPf03}) that
$$
M_{A}(z; X) \approx B_{\Omega}(z; X), \quad X \in \mathbb{C}^n,
$$
where $\approx$ is independent of $z$ and $X$.
So we can choose holomorphic coordinates
such that 
$$
{1\over C} D\left [{1\over \tau^2_1}, \cdots, {1\over \tau^2_n} \right]
\le \Big[g_{i\bar{j}}\Big] \le C D\left [{1\over \tau^2_1}, \cdots, {1\over \tau^2_n} \right],
$$
where $D[a_1,\cdots, a_n]$ is a diagonal matrix with diagonal entries $a_1,\cdots, a_n$.
Therefore,
$$
{1\over C} D[\tau_1^2, \cdots, \tau_n^2]
\le [g_{i\bar{j}}]^{-1}  \le C D[\tau_1^2, \cdots,  \tau_n^2].
$$
Additionally, by \eqref{Comparable Balls}  and the definition of the hyperbolic ball \eqref{hyperbolic}, it holds that
 $$\tau_k(a) \leq C \tau_k(z), \quad z \in D(a; w, C_1\tau(a)),$$ and the constant $C$ is independent of $a$.
Therefore, for $a = (a_1, ..., a_n) \in \Omega$,
\begin{align*}
r_1 (a)\|\dbar_1 u(\cdot, a_2, \ldots, a_n)\|_{L^{\infty}(D(a_1, r_1))} 
&\leq 
C\sup_{z \in D(a; w, r)} \sum_{k=1}^n r_k(z)\left| {\partial u \over \partial \bar{w}_k}(z) \right|
\\
&\leq 
C\sup_{z \in D(a; w, r)} \Big(\sum_{k=1}^n \tau_k^2(z)\left| {\partial u \over \partial \bar{w}_k}(z) \right|^2\Big)^{1/2}
\\
&\leq 
C\| \dbar u\|_{g, \infty, D(a; w, r)}
\\
&\leq  C\| \dbar u\|_{g, \infty, B_a(\epsilon)}.
\end{align*}
By Stokes' theorem, for $0<s_k <  r_k$,
$$
u(a)={1\over 2\pi i}\int_{|w_1-a_1|=s_1} {u(w_1, a_2, \cdots, a_n) \over w_1-a_1} d w_1 
+{1\over 2\pi i}\int_{|w_1-a_1|<s_1}{\d u\over \d \wbar_1}{1\over w_1-a_1} dw_1\wedge d\bar{w}_1.
$$
 By polar coordinates and (\ref{Comparable Balls}), we know that

\begin{align*}
|u(a)|
&\le {1\over \pi r_1^2} \int _{|z_1-a_1|<r_1} |u(w_1, a_2, \cdots, a_n)| dv_{z_1}+{2 r_1 \over 3} \|\dbar_1 u(\cdot, a_2,\cdots, a_n)\|_{L^{\infty}(D(a_1, r_1))} \\
&\le {1\over \pi r_1^2} \int _{|w_1-a_1|<r_1} |u(w_1, a_2, \cdots, a_n)| dv_{w_1}+C\|\dbar u\|_{g, \infty, B_a(\epsilon)}.
\end{align*}
Using the same estimate on the disc $|w_k-a_k| < s_k$ for $2 \leq k \leq n$, one gets that
\begin{eqnarray*}
|u(a)| &\leq & \oint_{D(a, C_1\tau(a)) }|u(w_1, ..., w_n)| dv_w +C\|\dbar u\|_{g, \infty, B_a(\epsilon)}   \\
&\leq & C^{2n} \oint_{B_a(\epsilon)} |u(w)| dv_w  + C\|\dbar u\|_{g, \infty, B_a(\epsilon)}.  
\end{eqnarray*}
Therefore the proof is complete.
\end{proof}

We remark that Proposition \ref{Cauchy--Pompeiu} also holds for smoothly bounded strictly pseudoconvex domains.
\newline
\par
For positive real-valued functions $f$ and $g$ on $\Omega$, we say $f \approx g$ for $z \in B_a(\epsilon)$ if for every $\epsilon > 0$ sufficiently small, there exists a constant $C = C(\epsilon, \Omega)$ so that
$$
C^{-1} \leq f(z)g(z)^{-1} \leq C, \quad z \in B_a(\epsilon).
$$
for all $a \in \Omega$.  A similar definition holds for $f \approx g$ for $z \in \Omega$.
\newline
\par
A domain $\Omega$ is homogeneous if it has a transitive (holomorphic) automorphism group. For convex homogeneous domains, the following results are known (see \cite{IY11}).

\begin{pro} \label{similar} Let $\Omega$ be a bounded homogeneous convex domain. Then,
$$
|K(z, a)| \approx K(a) \approx  {1 \over v(B_a(\epsilon))}, \quad z\in B_a(\epsilon),
$$
and for any $\epsilon > 0$, there is a $C_\epsilon$ such that  for any $a \in \Omega$,
$$
\max_{w \in B_{a}(\epsilon)}\Big{|}{K(z, w) \over K(z, a)}\Big{|} \leq C_\epsilon, \quad z \in \Omega.
$$
 
\end{pro}  

Let $\Omega $ be a smoothly bounded strictly pseudoconvex domain in $\CC^n$ and let $-r\in C^\infty(\Omegabar)$ be a strictly plurisubharmonic defining
function for $\Omega$. Define
$$
X(z,w)=r(w)+\sum_{j=1}^n {\d r(w)\over \d z_j} (z_j-w_j)+{1\over 2} \sum_{i,j=1}^n {\d^2 r(w)\over \d z_i\d z_j }(z_i-w_i)(z_j-w_j).
$$
It was proved by Fefferman \cite{Fef}, there is a $\delta>0$ such that
$$
K(z,w)={F(z,w)\over X(z, w)^{n+1} }+G(z,w) \log X(z,w)
$$
for all $(z, w)\in R_\delta (\Omega)=\{ (z,w)\in \Omega \times \Omega: r(z)+r(w)+|z-w|^2<\delta \} $, where
$F, G\in C^\infty(\Omegabar\times \Omegabar)$ and $F(z,z)>0$ on $\d \Omega$.
 \smallskip
 
When $\Omega$ is a smoothly bounded strictly convex domain. The definition of $X(z,w)$ can be simplified. In fact, one can take 
$$
X(z, a)=h_a(z) =   r(a) + \sum_{j=1}^n {\partial r \over \partial z_j}(a)(z_j - a_j).
$$
We can take $-r(z)$ to be strictly convex. Then by Taylor's theorem, one can easily see that 
$$
\Real h_a(z) \approx \Real X(z,w).
$$
Moreover,  for any $a, z\in \Omega$, write $\tilde{z}= (x_j)_{j=1}^{2n}$ and $\tilde{a} = (a_j)_{j=1}^{2n}$ if $z = (x_{2j - 1} + ix_{2j})_{j=1}^{n}$ and $a = (a_{2j - 1} + ia_{2j})_{j=1}^{n}$.
 If we apply Taylor's theorem
on the line segment between $\tilde{a}$ and $\tilde{z}$, then there is a $\theta\in (0,1)$ such that
$$
\Real h_a(z)=r(z)-\sum_{i,j=1}^{2n} {\d^2 r(\tilde{a}+\theta ( \tilde{z}-\tilde{a}) )\over \d \tilde{z}_ i\d \tilde{z}_j } (\tilde{z}_i-\tilde{a}_i)(\tilde{z}_j-\tilde{a}_j)
\approx r(z)+|z-a|^2.
$$
 Therefore, for any $a \in \Omega$,
$$
|h_a(z)| \approx  {r(z)+r(a)\over 2} + |z-a|^2+ \Big{|}\hbox{Im}  \sum_{j=1}^n {\d r(a)\over \d z_j}(z_j-a_j) \Big{|},\quad z\in \Omega.
$$
In particular this implies $h_a(z) \neq 0$. Therefore, by Fefferman's asymptotic expansion \cite{Fef} on strictly pseudoconvex domains mentioned above, we know that

\begin{lem} [\cite{Fef}] \label{haz integrals}
Let $\Omega$ be a smoothly bounded strictly convex domain.  Then, 
$$
|h_a(z)|^{-n-1} \approx K(a) \approx  {1 \over v(B_a(\epsilon))}, \quad z\in B_a(\epsilon),
$$
and there is a constant $C$ so that
$$
 \int_{\Omega} |h_a(z)|^{-n-1} dv_z \approx \int_{\Omega}  |K(z, a)| dv_z \approx \log {C\over \delta_\Omega (a)}, \qquad a \in \Omega,
$$
where $\delta_{\Omega}(\cdot)$ is the Euclidean distance function to $\partial \Omega$.  
Moreover, for any $\epsilon > 0$, there is a constant $C_\epsilon$ such that  for any $ a \in \Omega$,
$$
\max_{w \in B_{a}(\epsilon)}\Big{|}{K(z, w) h_a(z)^{n + 1}}\Big{|} \leq C_\epsilon, \quad z \in \Omega.
$$
\end{lem}

\medskip

\noindent{} {\bf Note: } We provide some insight into the integration of $|K(z,w)|$ as Forelli-Rudin type integral. Roughly,  one can view $\d \Omega$ as a space of homogeneous type with Borel measure $dX$ and quasi-distance $|X(z,w)|$.  Write
$$
|K(z,w)|\approx (\delta(z)+t +|X(\pi(z), \pi(w))|)^{-n-1},
$$
where $\pi(z)$ and $\pi(w)$ are the projections of $z$ and $w$ on $\d \Omega$ along the outer normal direction and $z,w\in R_\delta$. It follows that
$$
\int_{\d \Omega} (\delta(z)+t +|X|)^{-n-1} dX
\approx (\delta +t )^{-1}.
$$
Consequently,
$$
\int_\Omega |K(z,w)| dv(w) \approx \int_0^C (\delta(z) +t )^{-1} dt \approx \log {1\over \delta(z)}.
$$
For more information, one can consult the paper of Beatrous and the second author \cite{BL} and the papers of Krantz and the second author \cite{KL1, KL2}.

 \begin{lem} \label{finite}
Let $\Omega$ be either a smoothly bounded strictly pseudoconvex domain or a Cartan classical domain. Let $\phi(z) := \gamma\log K(z)$, $\gamma>0$. Then, for $\gamma$ sufficiently small,
$$
\int_{\Omega} e^{\phi(z)}dv_z < \infty \quad \hbox{and} \quad \|\dbar \phi\|^2_{i\d \dbar\phi}\le {1 \over 2}.   
$$
\end{lem}

\begin{proof} 
When $\Omega$ is a smoothly bounded strictly pseudoconvex domain, from Fefferman's asymptotic expansion for the Bergman kernel,
one has 
$$
\phi(z)\approx \gamma \log {1\over \delta(z)}
$$
and
$$
\int_\Omega e^{\phi(z)} dv(z) \approx \int_\Omega ({1\over \delta(z)})^\gamma dv
\approx \int_0^1 t^{-\gamma} dt <\infty.
$$
Notice that
$$
\|\dbar \phi\|^2_{i\d\dbar\phi} =\gamma \| \dbar \log K\|_g^2,
$$
where $g$ is the Bergman metric.
From Fefferman's asymptotic expansion formula (see also \cite{D94}), one gets the boundedness of  $\|\dbar \log K\|^2_g$. Choose $\gamma>0$ small
enough so that $\|\dbar \phi \|_g^2<1/2$.  
For the Cartan classical domains, the first inequality follows from  explicit formulas of the Bergman kernel \cite{Hua63}, and we compute the precise value of
$\|\partial \phi\|_{i\d\dbar \phi}$ in Section 6.1. 
\end{proof}

\section{Pointwise estimates}

An upper semicontinuous function $\phi$ defined on a domain $\Omega \subset \mathbb C^n$ with values in $\mathbb R\cup\{-\infty\}$ is called plurisubharmonic if its restriction to every complex line is subharmonic. Let $L^2(\Omega, \phi )$ denote the set of measurable functions $h$ satisfying
$ 
\int_{\Omega} |h(z)|^2e^{-\phi(z)} dv_z < \infty.
$
A $C^2$ function $\phi$ is called strongly plurisubharmonic if $i \d \dbar \phi$ is strictly positive definite. Now, let $\Omega $ be a bounded pseudoconvex domain and $\phi$ be strongly plurisubharmonic on $\Omega$. Then, for any $(0, 1)$-form $f= \sum_{k = 1}^n f_k(z) d\overline{z}_k$, define the norm of $f$ induced by $i \d \dbar \phi$ as (see also \cite{Bl05})
\begin{equation} \label{norm} 
|f|^2_{i\partial \bar{\partial}\phi} (z) := \sum_{j, k = 1}^n \phi^{j\bar{k}}(z) \overline{f_j(z)} f_k(z),
\end{equation}
where $(\phi^{j\bar{k}})^{\tau}$ equals the inverse of the complex Hessian matrix $H(\phi)$.  
Demailly's reformulation \cite{Dem82, Dem12} of H\"{o}rmander's theorem \cite{Hor65} says that {\it for any $\bar\partial$-closed $(0, 1)$-form $f$, the canonical solution in $L^2(\Omega, \phi)$
of  $\bar{\partial}u=f$ satisfies
\begin{equation} \label{Hor} 
\int_\Omega |u|^2 e^{-\phi}\, dv \leq  \int_\Omega |f|_{i\partial{\bar\partial}\phi}^2  e^{-\phi }\, dv.
\end{equation} 
}From this we see that when the $(0, 1)$-form $f$ is bounded in the Bergman metric $g$, then the canonical solution $u$ to  $\bar{\partial}u=f$ exists and satisfies the estimate \eqref{Hor}, whose right hand side is finite because it is dominated by a constant times a positive power of the Euclidean volume.
\medskip

  Donnelly and Fefferman \cite{DF83} (see also the papers by Berndtsson \cite{Be93, Be96, Be97}, McNeal \cite{Mc2} and Siu \cite{Si}) modified H\"ormander's theorem further as follows.

\begin{theorem}  [Donnelly-Fefferman type estimate] \label{DF} Let $\Omega$ be a bounded pseudoconvex domain in $\mathbb C^n$. Let $\psi$ and $\phi$ be plurisubharmonic functions on $\Omega$ such that $ i\partial \bar{\partial} \phi > 0$ and $|\d \phi|^2_{i\d \dbar\phi}\le {1 \over 2}$. Then, the canonical solution $u_0\in L^2(\Omega, \psi+{\phi \over 2})$ to $\bar{\partial}u=f$ satisfies
\begin{equation} \label{D-F} 
\int_\Omega |u_0|^2 e^{-\psi} dv\le 4 \int_\Omega |f|^2_{i\partial\bar{\partial}\phi} e^{-\psi} dv.
\end{equation}
 \end{theorem}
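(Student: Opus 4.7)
The plan is to invoke the Donnelly--Fefferman--Berndtsson twisted $L^2$-technique: transform $u_0$ into a companion function $v$ whose $L^2$-norm with respect to a larger weight equals the target $\int_\Omega|u_0|^2 e^{-\psi}\,dv$, apply H\"ormander's estimate \eqref{Hor} to a $\bar\partial$-equation for $v$, and then absorb the resulting error term using the smallness assumption $|\d\phi|^2_{i\d\dbar\phi}\le 1/2$.

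\textbf{Step 1 (Substitution).} Set $v := u_0\,e^{\phi/2}$. A direct calculation yields
\[
\bar\partial v \;=\; e^{\phi/2}\Bigl(f + \tfrac{1}{2}\,u_0\,\bar\partial\phi\Bigr), \qquad \int_\Omega|v|^2\,e^{-(\psi+\phi)}\,dv \;=\; \int_\Omega|u_0|^2 e^{-\psi}\,dv.
\]
Thus the target inequality becomes an $L^2(\Omega,\psi+\phi)$-estimate on $v$. Moreover, the orthogonality $u_0\perp A^2(\Omega,\psi+\phi/2)$ translates via the substitution $h\mapsto h\,e^{\phi/2}$ into $v\perp A^2(\Omega,\psi+\phi)$, so $v$ plays the role of the canonical $L^2(\Omega,\psi+\phi)$-solution of its own $\bar\partial$-equation.

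\textbf{Step 2 (Apply H\"ormander).} Since $\psi$ is plurisubharmonic, $i\partial\bar\partial(\psi+\phi)\ge i\partial\bar\partial\phi$ and hence $|\cdot|^2_{i\partial\bar\partial(\psi+\phi)}\le |\cdot|^2_{i\partial\bar\partial\phi}$. Applying \eqref{Hor} to $v$ with weight $\psi+\phi$ and simplifying the exponentials gives
\[
\int_\Omega|v|^2 e^{-(\psi+\phi)}\,dv \;\le\; \int_\Omega\Bigl|f+\tfrac{1}{2}u_0\bar\partial\phi\Bigr|^2_{i\partial\bar\partial\phi}\,e^{-\psi}\,dv.
\]

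\textbf{Step 3 (Absorb the error).} Using $|a+b|^2\le 2|a|^2+2|b|^2$ together with the hypothesis $|\bar\partial\phi|^2_{i\partial\bar\partial\phi}\le 1/2$,
\[
\Bigl|f+\tfrac{1}{2}u_0\bar\partial\phi\Bigr|^2_{i\partial\bar\partial\phi} \;\le\; 2|f|^2_{i\partial\bar\partial\phi}+\tfrac{1}{2}|u_0|^2\,|\bar\partial\phi|^2_{i\partial\bar\partial\phi} \;\le\; 2|f|^2_{i\partial\bar\partial\phi}+\tfrac{1}{4}|u_0|^2.
\]
Substituting into Step 2 and moving the $|u_0|^2$-term to the left absorbs it, producing
\[
\tfrac{3}{4}\int_\Omega|u_0|^2 e^{-\psi}\,dv \;\le\; 2\int_\Omega|f|^2_{i\partial\bar\partial\phi}\,e^{-\psi}\,dv,
\]
whence $\int_\Omega|u_0|^2 e^{-\psi}dv\le (8/3)\int_\Omega|f|^2_{i\partial\bar\partial\phi}e^{-\psi}dv\le 4\int_\Omega|f|^2_{i\partial\bar\partial\phi}e^{-\psi}dv$, as claimed.

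\textbf{Main obstacle.} The delicate point is the rigorous justification of Step 1 when $\phi$ is unbounded: $v$ may not a priori lie in $L^2(\Omega,\psi+\phi)$ (indeed, its membership there is essentially what the estimate is asserting), and the weighted Bergman spaces $A^2(\Omega,\psi+\phi/2)$ and $A^2(\Omega,\psi+\phi)$ need not coincide, so the transferred orthogonality must be handled with care. A natural remedy is to truncate $\phi$ by $\phi_N:=\min(\phi,N)$, run the above argument uniformly in $N$, and pass to the limit via monotone convergence. Alternatively, one can bypass the canonicality issue entirely by deriving the Step~2 inequality directly from the twisted Bochner--Kodaira--Nakano identity with $u_0$ (rather than $v$) as the trial form --- this produces the same bilinear expression on the right-hand side and hence the same absorption.
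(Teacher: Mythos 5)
The paper does not prove this theorem --- it is quoted from \cite{DF83, Be93, Be97} --- so there is no internal proof to compare against line by line; I therefore assess your argument on its own terms. Steps 2 and 3 are formally correct: $i\partial\bar\partial\psi\ge0$ gives $|\cdot|^2_{i\partial\bar\partial(\psi+\phi)}\le|\cdot|^2_{i\partial\bar\partial\phi}$, the algebra produces $2|f|^2_{i\partial\bar\partial\phi}+\tfrac14|u_0|^2$, and absorption gives $8/3\le 4$. The genuine gap is the one you flag in Step 1, and it is fatal to the substitution route as written: $v:=u_0e^{\phi/2}$ is \emph{not} in general the canonical solution in $L^2(\Omega,\psi+\phi)$. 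Unwinding the pairing, $\int_\Omega v\,\overline{g}\,e^{-\psi-\phi}dv=\int_\Omega u_0\,\overline{g}\,e^{-\psi-\phi/2}dv$, so $v\perp A^2(\Omega,\psi+\phi)$ would require $u_0\perp g$ in the $\psi+\phi/2$ inner product for every $g\in A^2(\Omega,\psi+\phi)$; to deduce that from $u_0\perp A^2(\Omega,\psi+\phi/2)$ one would need $A^2(\Omega,\psi+\phi)\subseteq A^2(\Omega,\psi+\phi/2)$, i.e.\ $e^{\phi/2}$ bounded above. In the applications $\phi=\gamma\log K\to+\infty$ at $\partial\Omega$, so the inclusion runs the other way. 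Without canonicality, \eqref{Hor} in Step 2 only bounds the minimal solution $\tilde v$ of $\bar\partial\tilde v=\bar\partial v$ in $L^2(\Omega,\psi+\phi)$, and since $v-\tilde v$ is holomorphic one has $\int_\Omega|v|^2e^{-\psi-\phi}dv\ge\int_\Omega|\tilde v|^2e^{-\psi-\phi}dv$, which points the wrong way. The a~priori finiteness needed for the absorption in Step 3 is the same difficulty in disguise.

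Your first proposed remedy does not repair this: $\phi_N:=\min(\phi,N)$ is not plurisubharmonic (minima of psh functions are not psh, only maxima are), so $i\partial\bar\partial\phi_N$ can carry negative distributional mass on $\{\phi=N\}$ and \eqref{Hor} is unavailable for the truncated weight. A workable truncation would instead exhaust $\Omega$ by the pseudoconvex sublevel sets $\{\phi<c\}$, on which $\phi$ is bounded and the two weighted Bergman spaces coincide, and then pass to the limit --- but that requires justifying convergence of the canonical solutions as $c\to\infty$, which is not automatic. Your second suggested alternative --- the twisted Bochner--Kodaira identity applied directly to $u_0$, using $u_0=\bar\partial^{*}_{\psi+\phi/2}\alpha$ and integrating by parts against $e^{-\psi}$ --- is in fact how \cite{DF83} and \cite{Be97} argue; the cross term there is bounded by the hypothesis $|\partial\phi|^2_{i\partial\bar\partial\phi}\le\tfrac12$ and absorbed exactly in the pattern of your Step 3, without ever invoking canonicality with respect to a second weight. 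You should carry out that argument (or cite it) rather than leave it as a closing remark.
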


Next, we prove the following lemma, using the estimates \eqref{Hor} and \eqref{D-F}.

\begin{lem} \label{L2 lemma}
Let $\Omega$ be a bounded pseudoconvex domain and $f$ be a $\dbar$-closed $(0, 1)$-form on $\Omega$. Let $\psi$ and $\phi$ be plurisubharmonic on $\Omega$ and $u_0$ and $u_1$ be the $L^2$-minimal solutions to $\dbar u = f$ in $L^2(\Omega, \psi + {\phi \over 2})$ and $L^2(\Omega, \phi)$, respectively.  Suppose $B$ is a compact subset of $\Omega$ and $h \in L^{\infty}(\Omega)$ with support in $B$.

\vspace*{0.5em}
\noindent \textit{(i)} If $ i\partial \bar{\partial} \phi > 0$ and $\|\d \phi \|^2_{i\d \dbar\phi}\le {1 \over 2}$ on $\Omega$, then
\begin{equation} \label{L2DF1}
\int_{B}  |u_0| dv \le 2 \Big(\int_\Omega |f|^2_{i\d \dbar \phi} e^{-\psi} dv \Big)^{{1 \over 2}} \Big(\int_B  e^{\psi} dv \Big)^{{1 \over 2}}
\end{equation}
and
\begin{equation} \label{L2DF2}
\Big|\int_{\Omega}  u_0 \overline{P(h)} dv \Big |\le 2 v(B) \|h\|_{\infty} \Big(\int_\Omega |f|^2_{i\d \dbar \phi} e^{-\psi} dv \Big)^{{1 \over 2}} \Big(\int_\Omega \max_{w\in B} |K(z,w)|^2  e^{\psi(z)} dv_z \Big)^{{1 \over 2}};
\end{equation}

\vspace*{0.5em}
\noindent \textit{(ii)}
$$
\int_{B}  |u_1| dv  \le 2 \Big(\int_\Omega |f|^2_{i\d \dbar \phi} e^{-\phi} dv\Big)^{{1 \over 2}} \Big(\int_B  e^{\phi} dv \Big)^{{1 \over 2}}
$$
and
$$
\Big|\int_{\Omega}  u_1 \overline{P(h)} dv \Big |  \le 2  v(B)\|h\|_{\infty} \Big(\int_\Omega |f|^2_{i\d \dbar \phi} e^{-\phi} dv \Big)^{{1 \over 2}} \Big(\int_\Omega \max_{w\in B} |K(z,w)|^2  e^{\phi(z)} dv_z \Big)^{{1 \over 2}}.
$$
\end{lem}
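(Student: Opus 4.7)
Both parts follow from a single strategy: split the constant $1$ as a product of weights and apply Cauchy--Schwarz, so that one factor can be controlled by a weighted $L^2$-estimate for $u_0$ (resp.\ $u_1$) and the other reduces to an integral we want to appear on the right-hand side. For (i) we use the weight splitting $1=e^{-\psi/2}\cdot e^{\psi/2}$ together with the Donnelly--Fefferman estimate \eqref{D-F}; for (ii) we instead split $1=e^{-\phi/2}\cdot e^{\phi/2}$ and use H\"ormander's estimate \eqref{Hor}.

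For the first bound in (i), write
\[
\int_{B}|u_0|\,dv=\int_{B}\bigl(|u_0|e^{-\psi/2}\bigr)\cdot e^{\psi/2}\,dv,
\]
apply Cauchy--Schwarz, extend the first integral from $B$ to $\Omega$, and invoke \eqref{D-F} to replace $\int_\Omega|u_0|^2e^{-\psi}dv$ by $4\int_\Omega|f|^2_{i\d\dbar\phi}e^{-\psi}dv$. For the second bound, use the reproducing formula and $\operatorname{supp}h\subset B$ to obtain the pointwise estimate
\[
|P(h)(z)|=\Bigl|\int_{B}K(z,w)h(w)\,dv_w\Bigr|\le v(B)\|h\|_\infty\max_{w\in B}|K(z,w)|,\qquad z\in\Omega.
\]
Then apply Cauchy--Schwarz to $\int_\Omega u_0\overline{P(h)}\,dv=\int_\Omega\bigl(u_0 e^{-\psi/2}\bigr)\cdot\bigl(\overline{P(h)}\,e^{\psi/2}\bigr)\,dv$, substitute the preceding pointwise bound for $|P(h)|$ in the second factor, and apply \eqref{D-F} to the first factor. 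This yields \eqref{L2DF2} with the constant $2$.

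Part (ii) is proved by the same two-step template but with the H\"ormander estimate \eqref{Hor} (applied with the weight $\phi$ instead of $\psi$), so everywhere in the argument above $e^{-\psi}$ is replaced by $e^{-\phi}$, $e^{\psi}$ by $e^{\phi}$, and $u_0$ by the minimizer $u_1$ in $L^2(\Omega,\phi)$. Modulo the different constant coming from \eqref{Hor} versus \eqref{D-F}, the computation is identical.

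There is no genuine obstacle here: the lemma is a bookkeeping consequence of Cauchy--Schwarz combined with the two $L^2$-solvability results already quoted. The only point requiring minor care is the pointwise control of $P(h)$ via the support assumption on $h$, which produces the explicit factors $v(B)\|h\|_\infty$ and $\max_{w\in B}|K(z,w)|$ that appear on the right-hand sides of \eqref{L2DF2} and its analogue in (ii).
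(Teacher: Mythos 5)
Your argument is correct and is essentially the paper's proof: the paper writes $\int_B |u_0|\,dv = \int_\Omega u_0\bar\beta\,dv$ with $\beta = \chi_B u_0/|u_0|$ before applying Cauchy--Schwarz with weights $e^{\mp\psi/2}$, which is just a different notation for your weight-splitting, and the treatment of $P(h)$ via the support bound and the two weighted $L^2$ estimates (\ref{D-F}) and (\ref{Hor}) is identical. No substantive difference.
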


\begin{proof} Let $\chi_{B}$ denote  the characteristic function on $B$, and let $\beta:=\chi_{B} {u_0(z) \over |u_0(z)|} $.  By \eqref{D-F},
$$
\left(\int_{B} |u_0| dv \right)^2=\left|\int _{\Omega}u_0 \,\bar{\beta} \, dv\,\right| ^2 \leq  \int\limits_\Omega |u_0 |^2e^{-\psi} dv \cdot \int\limits_{B}|\beta|^2e^{ \psi} dv 
 \leq  4   \int\limits_\Omega |f|^2_{i \partial \bar{\partial}\phi}e^{-\psi} dv  \int\limits_{B}   e^{\psi} dv,
$$
which proves \eqref{L2DF1}.  Notice that
\begin{align*}
\left|\int _{\Omega}u_0\overline{P(h )} dv \right|^2
&\leq  \int_{\Omega} |u_0|^2 e^{-\psi} dv \cdot  \int_\Omega |P(h) |^2 e^{\psi} dv \\
&\leq  4 \int_\Omega |f |_{i \d\dbar \phi}^2 e^{-\psi } dv \cdot v^2(B)  \cdot \|h\|^2_\infty \cdot  \int_\Omega \max_{w\in B} |K(z,w)|^2 e^{\psi(z)} dv_z,
\end{align*}
which proves \eqref{L2DF2}. Part (ii) can be proved similarly using H\"ormander's estimate \eqref{Hor} in place of Donnelly-Fefferman's estimate \eqref{D-F}. 
\end{proof}

\begin{theorem} [Key Estimate] \label{key} Let $\Omega$ be a Cartan classical domain or a smoothly bounded strictly convex domain.  Then, there is a constant $C$ such that for any $\bar{\partial}$-closed $(0,1)$-form $f$ on $\Omega$ with $\|f\|_{g, \infty}<\infty$, the canonical solution to $\dbar u = f$ satisfies
$$
|u(z)|\le C\|f\|_{g, \infty} \int_\Omega |K(z,w)|  dv_w, \quad z \in \Omega.
$$
\end{theorem}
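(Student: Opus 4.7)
The plan is to reduce $|u(a)|$ to an $L^1$-average over the Bergman ball $B_a(\epsilon)$ via Proposition~\ref{Cauchy--Pompeiu}, and then to control this average through a duality argument that exploits the canonical orthogonality $u\perp A^2(\Omega)$, passing through the weighted $L^2$-bounds of Lemma~\ref{L2 lemma}.

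Applying Proposition~\ref{Cauchy--Pompeiu} at $a\in\Omega$ yields $|u(a)|\le C\oint_{B_a(\epsilon)}|u(z)|\,dv_z+C\|f\|_{g,\infty}$. Since $P_\Omega(1)\equiv 1$, we have $\int_\Omega|K(a,w)|\,dv_w\ge 1$, so the $\|f\|_{g,\infty}$-term is already absorbed into the target right-hand side of \eqref{pointwise!}. To control the average, I pick $\sigma_a\in L^\infty(\Omega)$ with $\|\sigma_a\|_\infty\le 1$, $\mathrm{supp}\,\sigma_a\subset B_a(\epsilon)$, and $\int_{B_a(\epsilon)}|u|\,dv=\int_\Omega u\,\overline{\sigma_a}\,dv$. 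Set $\phi:=\gamma\log K$ with $\gamma>0$ small enough that Lemma~\ref{finite} supplies $\|\partial\phi\|^2_{i\partial\dbar\phi}\le\tfrac12$, and let $u_0$ be the solution of $\dbar u_0=f$ which is canonical in $L^2(\Omega,\psi+\phi/2)$ for a plurisubharmonic weight $\psi$ to be chosen. Because $u\perp A^2(\Omega)$, one formally obtains the identity $u=u_0-P(u_0)$, which combined with the self-adjointness of $P$ gives
$$\int_\Omega u\,\overline{\sigma_a}\,dv=\int_\Omega u_0\,\overline{\sigma_a}\,dv-\int_\Omega u_0\,\overline{P\sigma_a}\,dv.$$
Lemma~\ref{L2 lemma}(i) bounds each of the two terms; using Lemma~\ref{bergman bound} to replace $\max_{w\in B_a(\epsilon)}|K(z,w)|$ by $C|K(z,a)|$ in the convex-homogeneous case (or $C|h_a(z)|^{-n-1}$ in the strictly convex case), Proposition~\ref{similar} to identify $v(B_a(\epsilon))^{-1}\approx K(a)$, and Lemma~\ref{haz integrals} to identify $\int_\Omega|h_a|^{-n-1}\,dv$ with $\int_\Omega|K(z,a)|\,dv$, these bounds collapse to an expression governed by $\int_\Omega |K(a,w)|\,dv_w$.

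The main obstacle is the choice of the plurisubharmonic weight $\psi$. A naive choice such as $\psi\equiv 0$ yields only a bound of order $K(a)^{1/2}$, which on the unit ball is strictly weaker than the target $\int_\Omega|K(a,w)|\,dv_w\asymp\log(2/(1-|a|))$. The right choice must localize $e^\psi$ at the Bergman-metric scale of $B_a(\epsilon)$ while staying plurisubharmonic globally; the natural candidate, plurisubharmonic by the holomorphy of $K(\cdot,a)$, is $\psi(z):=2\log\bigl(|K(z,a)|/K(a,a)\bigr)$, which by Proposition~\ref{similar} is $O(1)$ on $B_a(\epsilon)$. With such a $\psi$, the Cauchy--Schwarz integrals in Lemma~\ref{L2 lemma}(i) reduce via Lemmas~\ref{bergman bound}--\ref{haz integrals} to the logarithmic factor $\int_\Omega|K(a,w)|\,dv_w$, and the Donnelly--Fefferman self-improvement from Lemma~\ref{finite} supplies the refinement needed to match the sharp bound.
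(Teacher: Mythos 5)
Your overall skeleton matches the paper's: apply Proposition~\ref{Cauchy--Pompeiu}, pass from $\oint_{B_a(\epsilon)}|u|$ to a duality pairing $\int u\overline{\sigma_a}=\int u_0\overline{\sigma_a}-\int u_0\overline{P\sigma_a}$ using $u\perp A^2$, estimate both terms with Lemma~\ref{L2 lemma}(i), and close via Proposition~\ref{similar}, Lemma~\ref{bergman bound}, and Lemma~\ref{haz integrals}. But the single step that carries the proof --- the choice of the pluriharmonic weight $\psi$ --- is made with the wrong sign, and this wrecks the estimate rather than improving it.

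You choose $\psi(z)=2\log\bigl(|K(z,a)|/K(a)\bigr)$, reasoning that $e^{\psi}$ should be localized near $B_a(\epsilon)$. This is backwards. In Lemma~\ref{L2 lemma}(i) the factor $\int_{B_a(\epsilon)} e^{\psi}\,dv$ appears in the product, so what you need is that $e^{\psi}$ be \emph{small} on $B_a(\epsilon)$, not of size $O(1)$. With your $\psi$, $e^{\psi}\approx 1$ on $B_a(\epsilon)$ by Proposition~\ref{similar}, so $\int_{B_a(\epsilon)}e^{\psi}\approx v(B_a(\epsilon))$ --- no gain over $\psi\equiv 0$ --- while $\int_\Omega|f|^2_{i\partial\dbar\phi}e^{-\psi}\,dv\approx K(a)^2\int_\Omega|K(z,a)|^{-2}\,dv\approx K(a)^2$ now blows up. Running the two Cauchy--Schwarz bounds of Lemma~\ref{L2 lemma}(i) through with your weight and dividing by $v(B_a(\epsilon))\approx K(a)^{-1}$ gives an upper bound of order $K(a)^{3/2}$, which on the ball is far larger than the claimed $\log K(a)$ and is even worse than the $K(a)^{1/2}$ you get from $\psi\equiv 0$. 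Invoking Lemma~\ref{finite} at the end does not repair this; that lemma only supplies integrability of $e^{\gamma\log K}$ and the bound $\|\partial\phi\|^2_{i\partial\dbar\phi}\le\tfrac12$ needed to apply Donnelly--Fefferman, and has no further ``self-improvement'' to contribute.

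The paper instead takes $\psi_a(z)=-\log|K(z,a)|$ (and $(n+1)\log|h_a(z)|$ in the strictly convex case). Since $K(\cdot,a)$ is holomorphic and non-vanishing on the domains considered, this $\psi_a$ is pluriharmonic --- not just the positive-sign logarithm is admissible. The sign is decisive: $|K(z,a)|\approx K(a)$ on $B_a(\epsilon)$ makes $e^{\psi_a}=|K(z,a)|^{-1}\approx K(a)^{-1}$ \emph{small} there, so $\int_{B_a(\epsilon)}e^{\psi_a}\approx v(B_a(\epsilon))^2$; at the same time $\int_\Omega|f|^2_{i\partial\dbar\phi}e^{-\psi_a}\,dv\le C\|f\|^2_{g,\infty}\int_\Omega|K(z,a)|\,dv$ and $\int_\Omega\max_{w\in B_a(\epsilon)}|K(z,w)|^2 e^{\psi_a}\,dv\le C\int_\Omega|K(z,a)|\,dv$. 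These three identifications are precisely what collapse the two terms in Lemma~\ref{L2 lemma}(i) to the target $\|f\|_{g,\infty}\int_\Omega|K(z,a)|\,dv$ after dividing by $v(B_a(\epsilon))$ and applying Proposition~\ref{Cauchy--Pompeiu}. So the missing idea in your write-up is the negative-sign, pluriharmonic weight $\psi_a(z)=-\log|K(z,a)|$ --- the piece that actually produces the logarithmic bound.
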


\begin{proof}
From the discussion after \eqref{Hor} we see that the canonical solution to  $\bar{\partial}u=f$ exists.
Suppose first $\Omega$ is a Cartan classical domain.  For an arbitrary $a\in \Omega$ and any sufficiently small $\epsilon>0$, let $\beta:=\chi_{B_a(\epsilon)} {u(z) \over |u(z)|} $, where $\chi_{B_a(\epsilon)}$ is the characteristic function of the hyperbolic ball $B_a(\epsilon)$. Let $\phi:=\gamma \log K(z)$ be a plurisubharmonic function on $\Omega$ for some chosen $\gamma$ that satisfies the condition in Lemma \ref{finite}. Define $\psi(z)=:\psi_{a} (z):=-\log |K(z, a)|$. Then $\psi_{a} $ is pluriharmonic and bounded on $\Omega$. 
Also define the function
$$
\phi_0:=\psi_{a} +{\phi\over 2},
$$
and let $u_0$ be the $L^2(\Omega, \phi_0)$ minimal solution to the equation $\bar{\partial}v=f$. Then by Theorem \ref{DF},
$$
\int_\Omega |u_0|^2 e^{-\psi} dv \le 4\gamma^{-1} \int_\Omega |f|^2_{ g} e^{-\psi} dv \le  4\gamma^{-1} \|f\|^2_{g, \infty} \int_\Omega   e^{-\psi} dv < \infty,
 $$
which implies that $ u_0 \in L^2(\Omega)$.
So
$u - u_0 \in A^2(\Omega)$ and
$$
\int _{B_a(\epsilon)}|u| dv=\int _\Omega u\bar{\beta} dv =\int \limits_\Omega u(\overline{\beta-P (\beta)}) dv =\int_\Omega u_0( \overline{\beta-P(\beta)}) dv
=\int_\Omega u_0 \overline{\beta} dv-\int_\Omega u_0 \overline{P (\beta)} dv.
$$
By Lemma \ref{finite} and \eqref{L2DF1} in Lemma \ref{L2 lemma},
\begin{align*}
\left|\int _{\Omega}u_0 \,\bar{\beta} \, dv\,\right|^2
& \leq 4  \int\limits_\Omega |f |^2_{i\partial \bar{\partial}\phi}e^{-\psi_{a} } dv  \cdot  \int\limits_{B_a(\epsilon)}   e^{\psi_{a}} dv  \\
&\leq C \|f\|^2_{g, \infty}   \int_{\Omega} |K(z,a)| dv_z  \cdot \int_{B_a(\epsilon)} |K(z,a)|^{-1} dv_z  \\
&\leq C_{\epsilon} \|f\|^2_{g, \infty}   \int_{\Omega} |K(z,a)| dv_z \cdot v(B_a(\epsilon)) \cdot K(a)^{-1} \\
&\leq C_{\epsilon} \|f\|^2_{g, \infty} v^2(B_a(\epsilon)) \int_{\Omega} |K(z,a)| dv_z,
\end{align*}
where the last two inequalities hold due to Proposition \ref{similar}, and $C_\epsilon$ is a constant depending on $\epsilon$. On the other hand, by \eqref{L2DF2} in Lemma \ref{L2 lemma} and Proposition \ref{similar} again,
\begin{align*}
 \left | \int _{\Omega}u_0 \overline{P(\beta)} dv \right |^2 &\leq 
 C v^2(B_a(\epsilon))  \int_\Omega |f |_{i\d\dbar \phi}^2 e^{-\psi_{a}} dv \cdot   \int_\Omega \max_{w\in \overline{B_a(\epsilon)}}|K(z, w)|^2 e^{\psi_{a}(z)} dv_z \\ 
 &\leq C_{\epsilon}  v^2(B_a(\epsilon)) \int_\Omega |f |_{i\d\dbar \phi}^2(z) |K(z,a)| dv_z    \cdot  \int_{\Omega} |K(z,a)|^{2-1}  dv_z \\ 
 &\leq C_{\epsilon} \|f\|^2_{g, \infty} v^2(B_a(\epsilon))     \left(\int_{\Omega} |K(z,a)|dv_z\right)^2.
\end{align*}
Combining the above estimates, one can see easily 
$$
\frac{1}{ v(B_a(\epsilon))} \int \limits_{B_a(\epsilon)}|u| dv \le C_{\epsilon}\|f\|_{g,\infty}  \int_\Omega |K(z,a)| dv_z.
$$
Fix $\epsilon > 0$.  By Proposition \ref{Cauchy--Pompeiu}, there exists a constant $C$ depending only on $\Omega$ such that
$$
|u(a)|\le  C\|f\|_{g,\infty}  \int_{\Omega} |K(z,a)| dv_z . 
$$

If $\Omega$ is instead a smoothly bounded strictly convex domain, then let $\psi_a(z) = (n+1) \log|h_a(z)|$, repeat the argument for the Cartan classical domains and use Lemma \ref{haz integrals}.
\end{proof}

In Section 6, Proposition \ref{eg}, we show that the estimate in Theorem \ref{key} is sharp for the Cartan classical domains. When $\Omega$ is the unit ball $\mathbb{B}^n$, the Key Estimate reduces to Berndtsson's result \eqref{pointwise}. Now we generalize \eqref{pointwise} and  \eqref{pointwise!} to smoothly bounded strictly pseudoconvex domains.

\begin{theorem} \label{pt-spseudo}
Let $\Omega$ be a smoothly bounded strictly pseudoconvex domain.  Then there is a constant $C$ such that for any $\dbar$-closed $(0, 1)$-form $f$ on $\Omega$ with $\|f\|_{g, \infty} < \infty$, there is a solution $u$ to $\dbar u = f$ such that
$$
|u(z)| \le C \|f\|_{g, \infty} \log (1+K(z)), \quad z \in \Omega.
$$
\end{theorem}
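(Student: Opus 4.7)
I would run the argument of Theorem~\ref{key} on $\Omega$, replacing the affine support function $h_a$ (which required convexity) by a global holomorphic support function adapted to strictly pseudoconvex geometry. The Key Estimate proof used only that $\psi_a$ is pluriharmonic with $e^{\psi_a(z)}\approx |K(z,a)|^{-1}$ on $B_a(\epsilon)$ and $\int_\Omega e^{-\psi_a}\,dv_z \approx \log(1/\delta_\Omega(a))$, so once the right weight is produced the rest of the proof goes through with only cosmetic changes.

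To produce the weight, I would invoke the Henkin--Ramirez construction: on a smoothly bounded strictly pseudoconvex $\Omega$ there is a holomorphic function $\Phi(z,a)$, defined and nonvanishing on a neighborhood of $\Omegabar\times\Omegabar$, satisfying
\[
|\Phi(z,a)| \approx \delta_\Omega(z)+\delta_\Omega(a)+|z-a|^{2}+\bigl|\Imagine\Phi(z,a)\bigr|.
\]
Fefferman's asymptotic expansion then gives $|K(z,a)|\approx |\Phi(z,a)|^{-(n+1)}$ near the boundary diagonal, and as in Lemma~\ref{haz integrals},
\[
\int_\Omega |\Phi(z,a)|^{-(n+1)}\,dv_z \approx \int_\Omega |K(z,a)|\,dv_z \approx \log\frac{1}{\delta_\Omega(a)} \approx \log\bigl(1+K(a)\bigr).
\]
Setting $\psi_a(z) := (n+1)\log|\Phi(z,a)|$ produces a pluriharmonic weight playing the role that $(n+1)\log|h_a|$ played on strictly convex domains, and one obtains the analogue of Lemma~\ref{bergman bound}: $|K(z,w)\Phi(z,a)^{n+1}|\le C$ uniformly for $w\in B_a(\epsilon)$ by the same Fefferman expansion.

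With $\psi_a$ in hand, take $\phi := \gamma \log K(z)$ as in Lemma~\ref{finite} and let $u_0$ denote the $L^2(\Omega,\psi_a+\phi/2)$-minimal solution of $\bar\partial v = f$ furnished by Donnelly--Fefferman (Theorem~\ref{DF}). Apply Lemma~\ref{L2 lemma} to bound $\int_{B_a(\epsilon)}|u_0|\,dv$ and the correction $\bigl|\int_\Omega u_0\,\overline{P(\beta)}\,dv\bigr|$ by a constant multiple of $\|f\|_{g,\infty}\,v(B_a(\epsilon))\int_\Omega |K(z,a)|\,dv_z$, just as in the Key Estimate. Finally apply the Cauchy--Pompeiu bound of Proposition~\ref{Cauchy--Pompeiu} -- valid on strictly pseudoconvex domains, as noted after its proof -- to pass from the $L^1$-average on $B_a(\epsilon)$ to the pointwise value $|u_0(a)|$, which yields the desired estimate.

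The principal obstacle is producing $\Phi(z,a)$ with the stated global, quantitative properties: this is the substance of the Henkin--Ramirez support function construction, obtained by globalizing the Levi polynomial via a Hefer-type decomposition together with an auxiliary $\bar\partial$-problem. A secondary subtlety is that $u_0$ is the minimizer in a weighted $L^2$ space rather than the canonical (unweighted) $L^2$-minimal solution; this mismatch is precisely why the theorem asserts only the existence of \emph{some} solution $u$ rather than a bound on the canonical one, and also explains the weaker hypothesis ``strictly pseudoconvex'' in place of the ``strictly convex'' of Theorem~\ref{key}.
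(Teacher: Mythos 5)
Your plan is a genuinely different route from the paper's. The paper does \emph{not} run the Key Estimate globally on $\Omega$; it localizes: it covers $\partial\Omega$ by small balls $\mathbb{B}(b^j,\delta)$, chooses $\delta$ small enough that each neighborhood $\Omega^j$ (between $\mathbb{B}(b^j,3\delta)\cap\Omega$ and $\mathbb{B}(b^j,4\delta)\cap\Omega$) is, after a polynomial change of coordinates, strictly convex, runs the Key Estimate machinery on each $\Omega^j$ (but with the global weight $\gamma\log K_\Omega$) to get local solutions $u^j$, glues with a partition of unity $v=\sum\eta_j u^j$, and then corrects the resulting smooth defect $\bar\partial v - f = \sum u^j\bar\partial\eta_j$ using the classical Henkin/Grauert--Lieb bounded solvability on strictly pseudoconvex domains. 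Your proposal instead globalizes the weight directly via a Henkin--Ramirez support function $\Phi(z,a)$, nonvanishing and holomorphic in $z$, comparable to the Levi polynomial near the boundary diagonal, and uses $\psi_a(z)=(n+1)\log|\Phi(z,a)|$ as the pluriharmonic twist. In principle this works, and you correctly identify the main cost: producing $\Phi$ with the required global, quantitative lower bound, plus verifying the analogues of Proposition~\ref{similar} and Lemma~\ref{bergman bound} with $\Phi$ replacing $h_a$. The paper's patching avoids all of that at the price of losing canonicity, since the glued $v-v_0$ is no longer the canonical solution.

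Your closing diagnosis of the ``secondary subtlety,'' however, is wrong, and it matters. In Theorem~\ref{key} the weighted minimizer $u_0$ is only a tool: the identity $\int_{B_a(\epsilon)}|u|\,dv = \int_\Omega u_0\,\overline{(\beta-P\beta)}\,dv$ (using $u-u_0\in A^2(\Omega)$ and $\beta-P\beta\perp A^2(\Omega)$) transfers the weighted-$L^2$ estimates on $u_0$ to the \emph{canonical} solution $u$, and the final bound in Theorem~\ref{key} is on $u$, not $u_0$. So the weighted-vs-unweighted mismatch is not why Theorem~\ref{pt-spseudo} is stated for ``some solution'' --- the patching is. Indeed, if your globalized argument is carried out correctly it would yield the bound $|u(z)|\le C\|f\|_{g,\infty}\log(1+K(z))$ for the canonical solution itself, which is strictly stronger than what the paper proves (the paper's remark after Theorem~\ref{pt-spseudo} only gets $(1+\log K(z))^2$ for the canonical solution by projecting). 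That would be a genuine improvement, so you should take seriously the burden of verifying the $\Phi$-analogues of Lemmas~\ref{bergman bound} and~\ref{haz integrals}, including the uniform bound $\max_{w\in B_a(\epsilon)}|K(z,w)\Phi(z,a)^{n+1}|\le C$, and address the minor point that the standard Henkin--Ramirez construction is formulated for the pole variable in a boundary neighborhood, so the definition of $\Phi(\cdot,a)$ for $a$ in a compact interior set needs a (routine) separate treatment.
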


\begin{proof} Let $r(z)$ be a strongly plurisubharmonic defining function for $\Omega$ such that $r(z)\in C^\infty(\Omegabar)$ and $r > 0$ in $\Omega$. Consider the polynomial
$$
X(z, w):= r(w)+  \sum_{j=1}^n \left. {\d r\over \d w_j}\right|_w (z_j-w_j)+{1\over 2}\sum_{j,k=1}^n \left. {\d^2 r \over \d w_j \d w_k} \right|_w (z_j-w_j)(z_k-w_k).
$$
Define the region $
R_\epsilon =\{(z, w) |z, w\in \Omega: r(z)+r(w)+|z-w|^2<\epsilon \}.$ For $(z,w)\in R_\epsilon$, Fefferman \cite{Fef} showed the Bergman kernel on $\Omega$ can be expressed as
\begin{equation} \label{expansion}
K(z,w)={F(z,w) \over X(z,w)^{n+1}}+ G(z,w) \log X(z,w),
\end{equation}
where $G, F\in C^\infty (\overline{\Omega} \times \overline{ \Omega})$, $F(z,z)>0 $ on $(\overline{\Omega} \times \overline{\Omega}) \cap R_\epsilon$ and $``\log"$ denotes the principal branch of the logarithm defined on $\mathbb{C} \setminus (-\infty, 0]$. The asymptotic expansion \eqref{expansion} implies that
\begin{equation} \label{int str}
\int_\Omega |K(z,w)| dv_w\le C(1+\log K(z) ),\quad z\in \Omega.
\end{equation}
Since the boundary $\d \Omega$ is compact, for any $\delta > 0$, there are finitely many $b^j\in \d \Omega, \,  j=1,..., m,\,$ such that
$
\d \Omega \subset \cup_{j=1}^m \mathbb B(b^j, \delta).
$
Choose smoothly bounded strictly pseudoconvex domains $\Omega^j, j= 1,..., m$ such that
$$
 \mathbb B(b^j, 3\delta)\cap \Omega \subset \Omega^j \subset \Omega \cap  \mathbb B(b^j, 4\delta).
$$
where $\delta$ is chosen small enough such that for each $j$, after a polynomial change of variables, each $\Omega^j$ is a strictly convex domain.  Let $\{\Omega^j\}_{j=m + 1}^{m + k}$ be a finite open cover of $\Omega \setminus \cup_{j=1}^{m}(\mathbb{B}(b^j, \delta) \cap \Omega)$ consisting of balls contained in $\Omega$.  In the argument of Theorem \ref{key} by letting $\phi_0 = \gamma \log K_{\Omega}(z)$ (instead of $\gamma \log K_{\Omega^j}(z)$) and using \eqref{int str}, we can solve the equation $\dbar u^j = f$ on $\Omega^j$ with minimal solution $u^j$ satisfying
\begin{equation} \label{u^j}
 |u^j(z)| \le C\|f\|_{g, \infty} \log (1+K (z)).
\end{equation} 

Let $\{\eta_j\}_{j=1}^{m + k}$ be a partition of unity of $\overline{\Omega}$ subordinate to the cover $\{B(b^j, \delta)\}_{j=1}^m \cup \{\Omega^j\}_{j= m + 1}^{m + k}$ and let
$v(z):=\sum_{j=1}^{m + k} \eta_j(z) u^j(z).$ Then $\dbar v=f+h,$ where $h:=\sum_{j=1}^{m + k} u^j \dbar \eta_j$ is a $\dbar$-closed (0,1)-form on $ \Omega$. By the integral formula in \cite{Hen70, G-L70}, there is a bounded solution $v_0$ to the equation $\dbar v_0=h$.  Let $u = v - v_0$.  Then 
$\dbar u = f$ and by \eqref{u^j},
$$
|u(z)| \leq  \sum_{j=1}^k \eta_j(z)C\| f\|_{g, \infty} \log(1 + K(z)) + C
\leq  C\|f\|_{g, \infty} \log(1 + K(z)).
$$
Therefore, the proof is complete. \end{proof}

\medskip

\noindent{}{\bf Remark}.

For a smoothly bounded strictly pseudoconvex domain $\Omega$, if the canonical solution is $u_0$, then for $h\in L^\infty(\Omega)$ with $\|h\|_\infty\le 1$,
$$
\Big|P[ h(\cdot) \log K(\cdot)](z) \Big|\le C\big (1+\log K(z)\big )^2.
$$
In fact, letting $\omega_t=\{z\in \Omega: \delta(z)>t\}$, by the Fefferman's expansion theorem on Bergman kernel \cite{Fef}, we know that
\begin{eqnarray*}
|P[h(\cdot) \log K (\cdot)](z)|
&\le& \int_{\Omega} |h(w)| \log K(w) |K(z,w)| dv(w)\\
&\le& \|h\|_\infty \int_{\Omega} \log K(w) |K(z,w)| dv(w)\\
&\approx& \|h\|_{\infty} \int_0^c \int_{\d \Omega_t} \log K(w) |K(z,w)| d\sigma(w) d t\\
&\le & C\|h\|_\infty \int_0^c (- \log t )  {1\over \delta(z)+ t} d t\\
&\le & C\|h\|_\infty \Big( {1\over \delta(z)} \int_0^{\delta(z)} (-\log t dt )+\int_{\delta(z)} ^c  -\log t   {1\over \delta(z)+ t} d t\Big) \\
&\le & C\|h\|_\infty \Big( \log {C\over \delta(z)} +\int_{\delta(z)} ^c  -\log (\delta(z)+ t )  {1\over \delta(z)+ t} d t\Big) \\
&\le & C\|h\|_\infty \Big( \log {C\over \delta(z)} \Big)^2.
\end{eqnarray*}
 Combining this with Theorem \ref{pt-spseudo}, one gets
$$
\big |u_0(z) \big|\le C\big (1+\log K(z)\big)^2.
$$

\section{Uniform estimates}

In this section, we obtain uniform estimates for the equation $\bar \partial u=f$ on the unit polydisc $\mathbb{D}^n$ and strictly pseudoconvex domains by imposing conditions on $f$ stronger than $\|f\|_{g, \infty} < \infty$.

\begin{theorem} \label{poly} For any $p \in  (1, \infty)$, there is a constant $C$ such that for any $\dbar$-closed $(0, 1)$ form $f$ on $\mathbb{D}^n$, the canonical solution $u$ to $\dbar u = f$ satisfies
$$
\|u\|_{\infty} \le C \Big \| f  \prod_{j=1}^n \Big ( \log \big( { 2\over 1 - |z_j|^2}\big) \Big ) ^{p} \Big\|_{g, \infty}.
$$
\end{theorem}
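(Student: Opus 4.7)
The plan is to follow the proof strategy of the Key Estimate (Theorem 3.6), but with a plurisubharmonic weight $\psi_a$ enhanced by $\log\log$ terms in order to absorb the blow-up $\prod_j\log(2/(1-|a_j|))$ that appears on $\mathbb{D}^n$ in remark (iv). The decisive one-variable fact is the uniform bound
\[
\int_{\mathbb D}\frac{dv_z}{|1-z\bar a|^{2}\,\log^{s}\!\big(2/(1-|z|^{2})\big)}\le C_s \qquad (s>1),
\]
which I would verify by splitting $\mathbb D$ into $\{1-|z|^{2}\le 1-|a|^{2}\}$ and its complement and substituting $u=\log(2/(1-|z|^{2}))$. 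By the product structure of $K$ on $\mathbb D^n$, this upgrades to
\[
\int_{\mathbb D^n}\frac{|K(z,a)|}{\prod_j\log^{s}\!\big(2/(1-|z_j|^{2})\big)}\,dv \le C_s^{\,n}
\]
uniformly in $a\in\mathbb D^n$ for every $s>1$.

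Fixing $a$, I would take $\phi=\gamma\log K(z)$ with $\gamma$ small enough (Lemma 2.5) and the weight
\[
\psi_a(z)=-\log|K(z,a)|+q\sum_{j=1}^n\log\log\!\big(2/(1-|z_j|^{2})\big),\qquad q\in(1,\,2p-1).
\]
The interval $(1,2p-1)$ is nonempty precisely when $p>1$. A direct $\partial\bar\partial$ computation shows that each summand $\log\log(2/(1-|z_j|^{2}))$ is plurisubharmonic on $\mathbb D$ (it reduces to the inequality $\log(2/(1-|z_j|^{2}))-|z_j|^{2}>0$), and $-\log|K(z,a)|$ is pluriharmonic, so $\psi_a$ is plurisubharmonic on $\mathbb D^n$. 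Applying the Donnelly--Fefferman estimate (Theorem 3.2) with these weights, using the hypothesis $|f|_g\le M\prod_j\log^{-p}(2/(1-|z_j|^{2}))$ together with the 1D bound above at $s=q+2p>1$, yields
\[
\int_{\mathbb D^n}|u_0|^{2}\,e^{-\psi_a}\,dv \le CM^{2}
\]
uniformly in $a$, where $u_0$ is the DF minimizer of $\dbar v=f$.

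Following the proof of Theorem 3.6, with $\beta=\chi_{B_a(\epsilon)}\bar u/|u|$ and $u$ the canonical solution, I would decompose $\int_{B_a(\epsilon)}|u|\,dv=\int u_0\overline{(\beta-P\beta)}\,dv$ and estimate each piece by weighted Cauchy--Schwarz. For the first piece one uses $e^{\psi_a}\approx K(a)^{-1}\prod_j\log^q(2/(1-|a_j|^{2}))$ on $B_a(\epsilon)$ (Proposition 2.3) and $v(B_a(\epsilon))\approx K(a)^{-1}$; the factors $\prod_j\log^q$ are then absorbed into the slack $2p-1-q>0$ left in the DF bound, yielding $|\int u_0\bar\beta|\le CMv(B_a(\epsilon))$ uniformly, which combined with Cauchy--Pompeiu (Proposition 2.2) contributes the desired $|u(a)|\le CM$. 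The main obstacle is the second piece $\int u_0\overline{P(\beta)}\,dv$: the crude bound from Lemma 2.4 combined with Cauchy--Schwarz produces $\int|K(z,a)|\prod_j\log^q(2/(1-|z_j|^{2}))\,dv\approx\prod_j\log^{q+1}(2/(1-|a_j|^{2}))$, which is \emph{not} uniform in $a$. To handle this term I would rewrite, by Fubini, $\int u_0\overline{P(\beta)}\,dv=\int_{B_a(\epsilon)}\overline\beta\,P[u_0]\,dv$ and then apply a sub-mean-value inequality for the holomorphic function $P[u_0]$ in order to control $\sup_{B_a(\epsilon)}|P[u_0]|$ by a weighted $L^{2}$-norm of $P[u_0]$, which is estimated uniformly through the same 1D key bound, again with exponent $s=q+2p>1$. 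This last step is the technical heart of the proof, and is the precise point where the sharp constraint $p>1$ enters.
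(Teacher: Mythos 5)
Your subharmonicity check for $\log\log(2/(1-|z|^2))$ is correct, and the one-variable fact $\int_{\mathbb D}|1-z\bar a|^{-2}\log^{-s}(2/(1-|z|^2))\,dv_z\le C_s$ for $s>1$ is true. The genuine gap is in your first-piece estimate. Cauchy--Schwarz together with Donnelly--Fefferman gives
\[
\Big|\int u_0\,\bar\beta\,dv\Big|^2 \le \int|u_0|^2e^{-\psi_a}\,dv\cdot\int_{B_a(\epsilon)}e^{\psi_a}\,dv \le C\,A_p(f)^2\int_{B_a(\epsilon)}e^{\psi_a}\,dv,
\]
and by Proposition~\ref{similar} the last factor is $\approx v(B_a(\epsilon))^2\prod_j\log^q(2/(1-|a_j|^2))$, so an extra $\prod_j\log^{q/2}(2/(1-|a_j|^2))$ survives, and it is a function of $a$ alone. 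Nothing in the $z$-integral can absorb it: $\int|f|^2_{i\partial\dbar\phi}e^{-\psi_a}\,dv$ is indeed controlled by your one-variable fact, but as a function of $a$ that integral is only uniformly bounded, not decaying --- for $s>1$ it converges to $\tfrac{\pi}{s-1}(\log 2)^{1-s}>0$ as $|a|\to 1$. The ``slack $2p-1-q>0$'' buys integrability of the $z$-integral; it never produces the $\prod_j\log^{-q}(2/(1-|a_j|^2))$ decay in $a$ that your first-piece bound requires.

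The paper takes a structurally different route that avoids this. It uses a single weight
\[
\phi_0(z)=\gamma\log K(z)-\sum_{j=1}^n p\log\bigl(A_0+\gamma\log K_j(z)\bigr)-\alpha\log|K(z,a)|,\qquad\alpha=\gamma+1,
\]
verifies directly that $i\partial\dbar\phi_0\ge\tfrac12\,i\partial\dbar(\gamma\log K)$, hence $|f|^2_{i\partial\dbar\phi_0}\lesssim|f|^2_g$, and feeds $\phi_0$ into Lemma~\ref{L2 lemma}. Because $\alpha-\gamma=1$, both $z$-integrals $\int|f|^2_g e^{-\phi_0}\,dv$ and $\int|K(\cdot,a)|^2e^{\phi_0}\,dv$ factor into one-variable Forelli--Rudin integrals at the critical exponent, and the embedded $\prod_j(A_0+\gamma\log K_j)^{-p}$ with $p>1$ makes both uniformly bounded; $v(B_a(\epsilon))$ comes out cleanly with no residual $\log$'s in $a$, and both pieces of the decomposition are handled at once --- in particular the Fubini/sub-mean-value workaround you sketch for $\int u_0\overline{P\beta}\,dv$ (and leave unverified) is unnecessary. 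To repair your plan, either move the logarithmic correction into the weight of Lemma~\ref{L2 lemma} as the paper does, or exhibit a concrete decay mechanism in place of the appeal to ``slack.''
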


\begin{proof} Let $A_0 = 2p + \log v(\mathbb D^n)$.  Choose $0 < \gamma < \sfrac{1}{2}$ so that $\phi (z):= \gamma \log K(z)$ satisfies Lemma \ref{finite}. 
Choose $\alpha>1$ so that $\alpha-\gamma=1$. Let $K_j(z) := \pi^{-1} (1-|z_j|^2)^{-2}$ and let
$$
\phi_0(z) := \phi(z) - \sum_{j=1}^n p\log(A_0 + \gamma\log K_j(z)) - \alpha \log|K(z, a)|.
$$
Since on $\mathbb D^n$, $\log|K(z, a)|$ is pluriharmonic and 
$$
A_0 + \gamma \log K_j = 2p + n\log \pi - \gamma \log \pi - 2\gamma \log (1 - |z_j|^2) \geq 2p,
$$
we know that
$$
i\d\dbar \phi_0 = \sum_{j=1}^n i\d\dbar(\gamma \log K_j)\big(1 - {p \over A_0 + \gamma \log K_j}\big) + \pi {\d(\gamma \log K_j) \wedge \dbar(\gamma \log K_j) \over (A_0 + \gamma \log K_j)^2} \geq  {1 \over 2}\sum_{j=1}^n i\d\dbar (\gamma \log K_j).
$$
Thus, $|f|^2_{i\d\dbar \phi_0}\le 2 | f|^2_{i\d\dbar\phi} = {2 \over \gamma}|f|^2_g.$ Let 
$$
A_p(f) = \Big\| f \Big{(}\prod_{j=1}^n (A_0 - \gamma \log\pi - 2\gamma\log (1 - |z_j|^2)\Big{)}^p  \Big \|_{g,  \infty}
$$
As in Theorem \ref{key}, let $u_0$ be the $L^2(\mathbb D^n, \phi_0)$ minimal solution to $\bar{\partial}u_0=f$ and $\beta := e^{i\theta(z)}\chi_{B_a(\epsilon)}$ where $u(z) = |u(z)|e^{i\theta(z)}$.  Then,
\begin{align*}
\int_{B_a(\epsilon)} |u| dv \leq &  Cv(B_a(\epsilon))\Big{(}\int_{D(0, 1)^n} |f|^2_{g}e^{-\phi_0} dv\Big{)}^{1 \over 2}\Big{(}\int_{D(0, 1)^n} |K(z, a)|^2 e^{\phi_0(z)} dv_z\Big{)}^{1 \over 2}
\\
\leq  &C A_p(f) v(B_a(\epsilon)) \Big{(}\int_{D(0, 1)^n } {e^{-\phi_0} dv_z \over \prod_{j=1}^n (A_0 + \gamma \log K_{j}(z))^{2p} }   \int_{D(0, 1)^n} |K(z, a)|^2e^{\phi_0} dv_z\Big{)}^{1 \over 2}
\\
= &C A_p(f) v(B_a(\epsilon)) \prod_{j=1}^n \Big{(}\int_{D(0, 1)} {|K_{j}(z, a)|^{\alpha} K_{j}(z)^{-\gamma}  dv_z  \over ( A_0 + \gamma \log K_{j}(z))^{p}}     \int_{D(0, 1)} {|K_{j}(z, a)|^{2 - \alpha} 
K_{j}(z)^{\gamma} dv_z \over  (A_0 + \gamma \log K_{j}(z))^p} \Big{)}^{1 \over 2}
\\
\leq  &CA_p(f) v(B_a(\epsilon)).
\end{align*}
Fix sufficiently small $\epsilon>0$. By Proposition \ref{Cauchy--Pompeiu},
$$
|u(a)| \le  C A_p(f) +C\|f\|_{g, \infty} \leq
C \Big \| f  \prod_{j=1}^n \Big( 2(n + p) - \log \big ( {1 - |z_j|^2}\big ) \Big )^{p} \Big\|_{g, \infty}. 
$$
Notice that 
$$
 2(n + p) - \log \big ( {1 - |z_j|^2}\big ) \le 5(n+p) \log {2\over 1-|z_j|^2},
 $$
 which completes the proof of the theorem. 
\end{proof}

In fact, the finiteness of the right hand side of the estimate in Theorem \ref{poly} is a stronger condition than $f$ being $L^\infty$ on $\mathbb{D}^n$. 
Recently, the first author and Pan and Zhang in \cite{DPZ} obtained uniform estimates for the canonical solution to $\bar \partial u=f$  when $f$ is continuous up to the boundary of $\mathbb{D}^n$, and more generally the Cartesian product of smoothly bounded planar domains.

However, the situation for strictly pseudoconvex domains is quite different.
The finiteness of the right hand side of the estimate in either Theorem \ref{uni-spsi} 
or Theorem \ref{uni-pseu} is a much weaker condition than $f$ being $L^\infty$ on each smooth domain considered. In fact, we allowed $f$ to
blow up on $\d \Omega$ to order less than $1/2$.

\begin{theorem} \label{uni-spsi} Let $\Omega$ be a smoothly bounded strictly convex domain.  For any $p \in  (1, \infty)$ and sufficiently small $\gamma>0$,
 there exists a constant $C$ such that for any $\dbar$-closed $(0, 1)$-form $f$, the canonical solution $u$ to $\dbar u = f$ satisfies
$$
\|u\|_{\infty} \le C \| (1+\log v(\Omega)+ \gamma \log K(z))^p f\|_{g, \infty}.
$$

\end{theorem}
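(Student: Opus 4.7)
The plan is to adapt the proof of Theorem \ref{poly} to the strictly convex setting by replacing the pluriharmonic function $\log|K(z,a)|$ in the polydisc weight with $(n+1)\log|h_a(z)|$. The latter is pluriharmonic on $\Omega$ because the Levi polynomial $h_a$ is holomorphic and nonvanishing; this is the substitution that extended the Key Estimate (Theorem \ref{key}) from the convex homogeneous case to the strictly convex case. Because $\log K(z) \approx -(n+1)\log \delta_\Omega(z)$ has a single-factor blowup, we use one logarithmic correction $(A_0 + \gamma \log K(z))^{-p}$ in place of the polydisc's product over coordinates.

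Fix $p > 1$, pick $\gamma > 0$ small enough that $\phi(z) := \gamma \log K(z)$ satisfies Lemma \ref{finite} and that $\gamma(n+1) < 1$, set $\alpha := 1 + \gamma$, and take $A_0 := 2p + \log v(\Omega)$ so $A_0 + \gamma \log K(z) \geq 2p$ on $\Omega$ (using the trivial lower bound $K(z) \geq v(\Omega)^{-1}$). For $a \in \Omega$ define
\[
\phi_0(z) := \phi(z) - p \log\bigl(A_0 + \gamma \log K(z)\bigr) + \alpha(n+1)\log|h_a(z)|.
\]
Since $\log|h_a|$ is pluriharmonic, the computation of Theorem \ref{poly} yields $i\d\dbar \phi_0 \geq \tfrac{1}{2}\, i\d\dbar \phi$, so $|f|^2_{i\d\dbar \phi_0} \leq (2/\gamma)|f|^2_g$. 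Let $u_0$ be the $L^2(\Omega,\phi_0)$-minimal solution to $\dbar u_0 = f$, and for small $\epsilon > 0$ set $\beta := e^{i\theta(z)}\chi_{B_a(\epsilon)}$ with $u = |u|e^{i\theta}$. Since $u - u_0 \in A^2(\Omega)$,
\[
\int_{B_a(\epsilon)} |u|\, dv \;=\; \int_\Omega u_0 \bar{\beta}\, dv \;-\; \int_\Omega u_0\,\overline{P\beta}\, dv,
\]
and each term is estimated by Cauchy--Schwarz with H\"ormander's inequality for $u_0$ in the weight $\phi_0$, together with Lemma \ref{bergman bound} and the asymptotic $|K(z,a)| \approx |h_a(z)|^{-(n+1)}$ from Proposition \ref{similar}.

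Writing $A_p(f) := \|(1+\log v(\Omega)+\gamma \log K(z))^p f\|_{g,\infty}$, the two terms reduce, using $\alpha - \gamma = 1$ together with the inequality $\delta_\Omega(z) \leq C|h_a(z)|$ (which absorbs the $K(z)^{\pm \gamma}$ factors), to integrals majorized by $\int_\Omega |h_a(z)|^{-(n+1)} \bigl(A_0 + \gamma \log K(z)\bigr)^{-p}\, dv_z$. By Lemma \ref{haz integrals} the unweighted $\int_\Omega |h_a|^{-(n+1)}\, dv$ diverges logarithmically in $1/\delta_\Omega(a)$, but the factor $(A_0 + \gamma \log K)^{-p}$ with $p > 1$ tames this via a dyadic level set decomposition in $\delta_\Omega$: the shell $\{2^{-k-1}\delta_\Omega(a) \leq \delta_\Omega(z) \leq 2^{-k}\delta_\Omega(a)\}$ contributes $O(k^{-p})$ to the weighted integral, which sums to a uniform constant. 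Together with the elementary estimate $\int_{B_a(\epsilon)} e^{\phi_0}\, dv \leq C\, \delta_\Omega(a)^{2(n+1)}(\log K(a))^{-p}$ (again a consequence of $\alpha - \gamma = 1$), Proposition \ref{Cauchy--Pompeiu} yields $|u(a)| \leq C A_p(f)$. The main technical obstacle is the uniform boundedness of $\int_\Omega |h_a|^{-(n+1)}(A_0 + \gamma \log K)^{-p}\, dv$: on the polydisc this integral factored across coordinates and reduced to one-dimensional Rudin-type estimates, but here the absence of product structure forces a genuine multidimensional level set argument, using Fefferman's asymptotic expansion to control the simultaneous behavior of $\delta_\Omega(z)$, $|h_a(z)|$ and $K(z)$ near $\partial\Omega$.
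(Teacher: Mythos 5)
Your overall architecture is the same as the paper's: set $\phi(z)=\gamma\log K(z)$, add a pluriharmonic ``point-mass'' centered at $a$ via $\log|h_a|$ together with the damping factor $-p\log(A_0+\gamma\log K)$, apply H\"ormander in the weight $\phi_0$ to the two terms $\int u_0\bar\beta$ and $\int u_0\overline{P\beta}$, and finish with Proposition~\ref{Cauchy--Pompeiu}. One point in your favor: you write $\phi_0=\phi-p\log(A_0+\gamma\log K)+\alpha(n+1)\log|h_a|$, whereas the paper's displayed formula has the opposite sign on the $\log h_a$ term. Your sign is the one consistent with the polydisc template (where the analogous term is $-\alpha\log|K(z,a)|=\alpha\psi_a$) and is also the only sign for which the paper's own passage from $|K(z,a)|^2K^\gamma|h_a|^{\mp\alpha(n+1)}$ to $|K(z,a)|^{2-\alpha}K^\gamma$ is coherent, so this appears to be a typo in the paper that you silently corrected.

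However, there is a genuine flaw in your reduction step. You claim that \emph{both} Cauchy--Schwarz factors are majorized by $\int_\Omega|h_a(z)|^{-(n+1)}(A_0+\gamma\log K(z))^{-p}\,dv_z$ ``using $\alpha-\gamma=1$ together with the inequality $\delta_\Omega(z)\le C|h_a(z)|$.'' This works only for the first factor: there $K(z)^{-\gamma}|h_a(z)|^{-\alpha(n+1)}\le C|h_a(z)|^{\gamma(n+1)-\alpha(n+1)}=C|h_a(z)|^{-(n+1)}$ precisely because $K^{-\gamma}\approx\delta_\Omega^{\gamma(n+1)}\lesssim|h_a|^{\gamma(n+1)}$ goes in the right direction. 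For the second factor $\int|K(z,a)|^2K(z)^\gamma|h_a(z)|^{\alpha(n+1)}(A_0+\gamma\log K)^{-p}\,dv_z$, after $|K(z,a)|\lesssim|h_a|^{-(n+1)}$ one is left with $K(z)^\gamma|h_a(z)|^{(\gamma-1)(n+1)}\approx\delta_\Omega(z)^{-\gamma(n+1)}|h_a(z)|^{-(1-\gamma)(n+1)}$, and now $\delta_\Omega(z)\le C|h_a(z)|$ bounds this \emph{below}, not above, by a multiple of $|h_a(z)|^{-(n+1)}$; the two integrands are genuinely incomparable when $|h_a(z)|\gg\delta_\Omega(z)$. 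So the second integral is \emph{not} dominated by $\int|h_a|^{-(n+1)}(A_0+\gamma\log K)^{-p}\,dv$, and your level-set argument cannot be applied to it in the form you describe. The paper instead keeps the second factor as $\int|K(z,a)|^{2-\alpha}K(z)^\gamma(A_0+\gamma\log K)^{-p}\,dv$ and runs the coarea/Forelli--Rudin computation directly on it: $\int_{\partial\Omega_t}|h_a|^{-(1-\gamma)(n+1)}\,d\sigma_t\approx(t+\delta_\Omega(a))^{-1+\gamma(n+1)}$, and multiplying by $t^{-\gamma(n+1)}(\log(1/t))^{-p}$ yields a bounded $t$-integral since $p>1$ (and $\gamma(n+1)<1$). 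This separate estimate is what actually closes the proof; without it your argument has a gap at the second integral.

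The remaining parts --- $i\partial\bar\partial\phi_0\ge\tfrac12 i\partial\bar\partial\phi$ from pluriharmonicity of $\log|h_a|$, $\int_{B_a(\epsilon)}e^{\phi_0}\,dv\approx\delta_\Omega(a)^{2(n+1)}(\log K(a))^{-p}$ via $\alpha-\gamma=1$ and Proposition~\ref{similar}, and the dyadic bound for the \emph{first} factor --- are all correct, so the fix is local: replace ``both reduce to $\int|h_a|^{-(n+1)}(A_0+\gamma\log K)^{-p}$'' by a direct Forelli--Rudin estimate of $\int|K(z,a)|^{2-\alpha}K^\gamma(A_0+\gamma\log K)^{-p}\,dv$ as above.
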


\begin{proof} 
Choose $0 < \gamma < \sfrac{1}{(n+2)}$ so that $\phi(z):=\gamma \log K(z)$ satisfies Lemma \ref{finite} and let $\alpha = \gamma + 1$. Let $
A_0:= 2p + \gamma \log v(\Omega)
$
and let
$$
\phi_0(z)=\phi(z)-(n+1) \alpha \log h_a(z) -p \log (A_0 + \gamma \log K(z)).
$$
Notice that
$$
i\d\dbar \phi_0 \geq \Big(1-{p \over A_0+ \phi}\Big) i \d\dbar \phi \geq \frac{i \d\dbar \phi}{2}
$$
and $\gamma K(z) > \gamma v(\Omega)^{-1}.$ Therefore 
$$
|f|^2_{i\d\dbar \phi_0} \leq {2 \over \gamma} |f|^2_{g}.
$$ Define
$$
A_p(f) := \left \| (A_0 + \gamma \log K(z))^p f \right \|_{g, \infty}.
$$
Using arguments similar to those in Theorems \ref{key} and \ref{poly} and $\alpha-\gamma=1$,
\begin{align*}
\int_{B_a(\epsilon)} |u| dv &\leq  Cv(B_a(\epsilon))\Big{(}\int_{\Omega} |f|^2_{i\partial \dbar \phi}e^{-\phi_0} dv\Big{)}^{1 \over 2}\Big{(}\int_{\Omega} |K(z, a)|^2 e^{\phi_0(z)} dv_z\Big{)}^{1 \over 2}
\\
&\leq  CA_p(f)v(B_a(\epsilon)) \Big{(}\int_{\Omega} {e^{-\phi_0} \over (A_0 + \gamma \log K(z))^{2p}} dv_z  \int_{\Omega} |K(z, a)|^2e^{\phi_0} dv_z\Big{)}^{1 \over 2}
\\
&= CA_p(f)v(B_a(\epsilon)) \Big{(}\int_{\Omega} {K(z)^{-\gamma}|h_a(z)|^{\alpha(n+1) } \over (A_0 + \gamma \log K(z))^{p} } dv_z \int_{\Omega} {|K(z, a)|^2K( z)^{\gamma}|h_a(z)|^{-\alpha (n+1)}  \over (A_0 + \gamma\log K(z))^p}  dv_z \Big{)}^{1 \over 2}
\\
&\leq  CA_p(f)v(B_a(\epsilon)) \Big{(}\int_{\Omega} {|K(z, a)| dv_z \over (A_0 + \gamma \log K(z))^{p}}  \int_{\Omega} {|K(z, a)|^{2 - \alpha} K(z)^{\gamma} \over (A_0 + \gamma \log K(z))^p} dv_z\Big{)}^{1 \over 2}
\\
&\leq  CA_p(f)v(B_a(\epsilon)),
\end{align*}
where the last inequality follows from Fefferman's asymptotic expansion. In fact, since ${n \over n + 1} < 2 - \alpha$, if 
$
\Omega_t = \{z: r(z) > t\}
$
where $r(z)$ is a defining function satisfying the definition of $h_a(z)$, then
\begin{eqnarray*}
 \int_{\Omega} {|K(z, a)|^{2 - \alpha} K(z)^{\gamma} \over (A_0 + \gamma \log K(z))^p} dv_z
& \le & C\Big [1+ \int_0^\epsilon \int_{\d \Omega_t } {|K(z, a)|^{2 - \alpha} K(z)^{\gamma} \over (A_0 + \gamma \log K(z))^p}  d\sigma_t(z) dt \Big]\\
& \le & C\Big [1+ \int_0^\epsilon {t^{-(2 - \alpha)(n+1)+n} t^{-\gamma(n+1)} \over (A_0 - \gamma (n+1) \log t)^p}  dt \Big]\\
& \le & C\Big [1+ \int_0^\epsilon {1 \over  t ( \log t)^p}  dt \Big]\\
&\le& C \Big[1-{\log \epsilon \over p-1}\Big].
\end{eqnarray*}
 By Proposition \ref{Cauchy--Pompeiu}, for a fixed $\epsilon > 0$ sufficiently small,
$
|u(a)|\le C A_p(f) +\|f\|_{g, \infty} \leq C A_p(f).
$
\end{proof}

Using an argument similar to the proof of Theorem \ref{pt-spseudo} we get the following generalization of Theorem \ref{uni-spsi} to smoothly bounded strictly pseudoconvex domains.
 
\begin{theorem} \label{uni-pseu} Let $\Omega$ be a smoothly bounded strictly pseudoconvex domain. Then, for any $p \in  (1, \infty)$, there exists a constant $C$ such that for any $\dbar$-closed $(0,1)$-form $f$, there is a solution $u$ to $\bar \partial u=f$ that satisfies
$$
\|u\|_{\infty} \le C \| (\log K(\cdot))^p f(\cdot)\|_{g, \infty}.
$$
\end{theorem}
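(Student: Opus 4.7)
The plan is to transfer the weighted $L^2$ estimate of Theorem \ref{uni-spsi} from strictly convex domains to strictly pseudoconvex domains by the localization-and-gluing scheme of Theorem \ref{pt-spseudo}. First, I would fix a finite cover $\{\Omega^j\}_{j=1}^{m+k}$ of $\overline{\Omega}$ exactly as in Theorem \ref{pt-spseudo}: for $1 \le j \le m$, each $\Omega^j$ is a smoothly bounded subdomain of $\Omega$ that becomes strictly convex after a polynomial change of coordinates and covers a boundary neighborhood $\mathbb B(b^j,\delta)\cap\partial\Omega$; for $m < j \le m+k$, $\Omega^j$ is a Euclidean ball compactly contained in $\Omega$. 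Let $\{\eta_j\}$ be a smooth partition of unity of $\overline{\Omega}$ subordinate to this cover.

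On each strictly convex $\Omega^j$ with $j \le m$, I would replicate the proof of Theorem \ref{uni-spsi} with the single substitution used in the proof of Theorem \ref{pt-spseudo}: in place of $\gamma \log K_{\Omega^j}$ use the globally defined plurisubharmonic function $\phi(z) := \gamma \log K_\Omega(z)$. By Lemma \ref{finite} applied to $\Omega$ itself, $\|\partial \phi\|_{i\partial\bar\partial\phi}^2 \le 1/2$ on $\Omega$, which descends to $\Omega^j$. Forming
$$
\phi_0(z) := \gamma\log K_\Omega(z) - (n+1)\alpha \log h_a^j(z) - p\log\bigl(A_0 + \gamma \log K_\Omega(z)\bigr),
$$
with $h_a^j$ the convex defining function associated to $\Omega^j$, $\alpha = 1 + \gamma$, and $A_0$ sufficiently large, one still has $i\partial\bar\partial\phi_0 \ge \tfrac{1}{2} i\partial\bar\partial\phi$ by the same computation as in Theorem \ref{uni-spsi}. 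The Donnelly--Fefferman estimate, combined with the integral manipulations of Theorem \ref{uni-spsi} and the Fefferman-type bound \eqref{int str} (applied to rewrite $\int_{\Omega^j}|K_{\Omega^j}(z,a)|\,dv$ in terms of $\log K_\Omega$), yields a solution $u^j$ of $\dbar u^j = f|_{\Omega^j}$ with
$$
\|u^j\|_{L^\infty(\Omega^j)} \le C \bigl\|(\log K_\Omega(\cdot))^p f\bigr\|_{g,\infty}.
$$
For $j > m$ a standard local $\dbar$-solution on the ball $\Omega^j$ satisfies the same type of bound since $\log K_\Omega$ is bounded on a compactly contained subset of $\Omega$.

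Next, gluing exactly as in Theorem \ref{pt-spseudo}: set $v := \sum_j \eta_j u^j$, so $\dbar v = f + h$ with $h := \sum_j u^j \dbar \eta_j$ a $\dbar$-closed $(0,1)$-form on $\Omega$ satisfying $\|h\|_\infty \le C \max_j \|u^j\|_\infty$. Since $\Omega$ is strictly pseudoconvex, the Henkin--Grauert--Lieb integral kernel produces $v_0$ with $\dbar v_0 = h$ and $\|v_0\|_\infty \le C\|h\|_\infty$. The desired solution is $u := v - v_0$, whose uniform bound is the claimed one.

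The main obstacle is bookkeeping: one must verify that the two weighted integrals appearing in the proof of Theorem \ref{uni-spsi} still converge with the same order of growth when $\gamma \log K_{\Omega^j}$ is replaced by $\gamma \log K_\Omega$ in $\phi_0$. This should pose no real difficulty because near $\partial\Omega^j\cap\partial\Omega$ Fefferman's expansion gives $K_\Omega \approx K_{\Omega^j}$, so the two weights are comparable there, while on the complementary compact portion of $\Omega^j$ all relevant quantities are uniformly bounded. Once this verification is complete, the remainder of the argument is identical to that of Theorem \ref{pt-spseudo}.
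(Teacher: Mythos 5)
Your proposal is correct and follows exactly the approach the paper intends: the paper's proof is the single sentence ``using an argument similar to the proof of Theorem \ref{pt-spseudo},'' and your write-up supplies precisely that argument --- the finite cover by locally convex pieces plus interior balls, replacement of $\gamma\log K_{\Omega^j}$ by $\gamma\log K_\Omega$ in the weight so that Lemma \ref{finite} is applied on $\Omega$, the Theorem \ref{uni-spsi} estimate on each piece, and the partition-of-unity gluing with a Henkin--Grauert--Lieb correction. The only very minor imprecision is your gloss that \eqref{int str} is used to ``rewrite $\int_{\Omega^j}|K_{\Omega^j}(z,a)|\,dv$ in terms of $\log K_\Omega$''; what is actually needed is that the two weighted integrals appearing in the Theorem \ref{uni-spsi} argument remain \emph{uniformly bounded} in $a$ when $K_{\Omega^j}$ is swapped for $K_\Omega$ in the $\phi$ part of $\phi_0$, which you correctly identify as a bookkeeping verification based on $K_\Omega\approx K_{\Omega^j}$ near the shared boundary.
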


\medskip

\noindent{\bf Remark.}

 Let $f\in L^\infty_{(0,1)}(\Omega)$ be a $\dbar$-closed form on a smoothly bounded strictly pseudoconvex domain $\Omega$. Henkin and Romanov's theorem in \cite{HR} states that there exists a solution $u\in C^{1/2}(\Omega)$. 
Theorem \ref{pt-spseudo} implies that one can find a bounded solution when
$(\log {1\over \delta(z)})^p  \delta(z) |f(z)|^2 $ is bounded. Moreover, Lieb and Range in \cite[Theorem 2 (i)]{LR} showed that uniform estimates hold for the canonical solutions to the
$ \bar \partial$-equations on $\Omega$.

\section{Additional estimates for Cartan classical domains}

A domain $\Omega$ is symmetric if for all $a \in \Omega$, there is an involutive  automorphism $G$ such that $a$ is isolated in the set of fixed points of $G$.  
All bounded symmetric domains are convex and homogeneous. E. Cartan proved that all bounded symmetric domains in $\mathbb C^N$ up to biholomorphism are the Cartesian product(s) of 
the following four types of Cartan classical domains and two domains of exceptional types. 

\begin{definition} A Cartan classical domain is a domain of one of the following types:

\begin{enumerate}[label=(\roman*)]
\item I$(m, n) := \{z \in M_{(m, n)}(\mathbb C): I_m - zz^{*} > 0\},\, m \leq n$;
\item II$(n) := \{z \in I(n, n): z^{\tau} = z\}$;
\item III$(n) := \{z \in I(n, n): z^{\tau} = - z\}$;
\item IV$(n) := \{z \in \mathbb C^n: 1 - 2|z|^2 + |s(z)|^2 > 0 \hbox{ and } |s(z)| < 1\}$, where $s(z) := \sum_{j=1}^n z_j^2$ and $n > 2$.
\end{enumerate}
Here $z^{*}:=\bar{z}^{\tau}$ is the conjugate transpose of $z$.
\end{definition}

Let $\Omega$ be a Cartan classical domain.  Denote the rank, characteristic multiplicity, genus, complex dimension and kernel index of $\Omega$  
by $r, {\bf a}, p, N$ and $k$, respectively.  Their values are given in the following chart.

\medskip

\begin{center} 
\begin{tabular}{|c|ccccc|}
\hline 
{ Classical Domain}  & { rank $r$ }  &   {multiplicity {\bf a}}  & genus $p$ & {  dimension $N$ } & index $k$\\
 \hline 
I(m, n), $m \leq n$ &  m  & 2 & m+n & mn & 1\\
\hline
II(n) &  n  & 1 & n+1 & $n(n+1)/2$  & 1\\
\hline
III(2n+$\epsilon$), $\epsilon=0$ or 1 &  n  & 4 & $2(2n+\epsilon-1)$ & $n(2n + 2\epsilon -1)$  & ${1/2}$\\
\hline
IV(n) &  2  & n-2 & $ n$ & n  & 1\\
\hline
\end{tabular}
\captionof{table}{Characteristics of Classical Domains}
\end{center}

\medskip

Hua \cite{Hua63} obtained explicit formulas for the Bergman kernels on the Cartan classical domains. For a domain $\Omega$ of type I, II or III,
$$
K(z,w)=C_{\Omega}\left[\det (I-z w^*)\right]^{-pk},
$$
 and for a domain of type IV, 
$$
K(z,w)=C_{n} [1-2 \sum_{j=1}^n z_j \wbar_j +s(z) \overline{s(w)} ]^{-n}.
$$
For any $z\in \Omega$,
$$
\delta_\Omega(z) \leq  K(z)^{-1 \over rpk}.
$$
Let $\lambda=p k$. By Theorem 3.8 in \cite{F-K90}, one can write the Bergman kernel on a Cartan classical domain $\Omega$ as follows:
$$
K(z,w)=h(z,w)^{-\lambda}=\sum_{{\bf m}\ge 0} (\lambda)_{\bf m} K_{\bf m} (z,w),
$$
where
$$
{\bf m}=(m_1,\cdots, m_r) \ \hbox{ and } \  {\bf m}\ge 0 \iff m_1\ge m_2\ge \cdots\ge m_r\ge 0
$$
and
$$
(\lambda)_{\bf m}={\Gamma_{\Omega}(\boldsymbol\lambda+{\bf m})\over \Gamma_\Omega(\boldsymbol\lambda)},\quad \Gamma_\Omega({\bf s})=c_\Omega \prod_{j=1}^r \Gamma \Big(s_j-(j-1){a\over 2}\Big), \quad \boldsymbol\lambda = (\lambda,..., \lambda).
$$
Here, $K_{\bf m}$ is the Bergman kernel for homogeneous polynomials in $\mathbb{C}^r$ of degree $|{\bf m}|=m_1+ \cdots+ m_r$.
For each Cartan domain $\Omega$, there is a subgroup $\mathcal{K}(\Omega)$ of the unitary group such that for each $z\in \Omega$ there is $k \in \mathcal{K}(\Omega)$ such that
$z=k \tilde{z}$ where $\tilde{z}\in \mathbb{C}^r \times \prod_{j = r +1}^{N} \{0\}$ and $K_{\bf m}(z,z)=: K_{\bf m}(\tilde{z}, \tilde{z})$.

The following Forelli-Rudin type integral was studied by Faraut and Koranyi in \cite{F-K90}:
$$
J_{\beta, c}(z):=\int_\Omega K(w)^{\beta} |K(w, z)|^{1+c-\beta} dv_w.
$$
By the proof of Theorem 4.1 in  \cite{F-K90}, one has
$$
J_{\beta, c}(z)=\sum_{{\bf m}\ge 0} {|(\mu)_{\bf m}|^2 \over ((1-k \beta) p)_{\bf m}} { K}_{\bf m}(z,z),\quad \mu={kp\over 2} (1+c-\beta).
$$
 Using Stirling's formula, one can show (see (4.3) in \cite{F-K90}, or (2.9) in \cite{E-Z04}) that as $m$ varies, 
$$
  {|({pk (1-\beta)\over 2})_{\bf m}|^2 \over ((1-k \beta) p)_{\bf m}} \approx   { ({k p\over 2})_{\bf m}^2 \over ( p)_{\bf m}},
$$ 
which  implies that \begin{equation}\label{J_0}
J_{\beta,0}(z)\approx J_{0,0}(z)=\int_\Omega |K(z,w)| dv(w),\quad  \beta<{1\over pk}.
\end{equation}

More computations were carried out by  Faraut and Koranyi in \cite{F-K90}.
\begin{theorem} [\cite{F-K90}] \label{Thm-F-K} For any $\beta<{1 \over pk}$,

\item \quad (i)  $J_{\beta, c}(z)$  is bounded for all $z \in \Omega$ if and only if $ c<-\frac{(r-1){\bf a}}{2p};$
   \item \quad (ii) $J_{\beta, c}(z) \approx K( z)^{c},$ \quad if $ c>\frac{(r-1){\bf a}}{2p}$.
\end{theorem}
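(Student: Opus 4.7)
My plan is to deduce both parts from the Peter-Weyl type expansion
$$J_{\beta,c}(z) \;=\; \sum_{\mathbf{m} \geq 0} \frac{|(\mu)_{\mathbf{m}}|^2}{((1-\beta)p)_{\mathbf{m}}}\, \mathbf{K}_{\mathbf{m}}(z,z), \qquad \mu = \tfrac{p}{2}(1+c-\beta),$$
recorded in the excerpt, by pinning down the asymptotics of the coefficient ratio and comparing it with the Bergman expansion
$$K(z)^{c} \;=\; h(z,z)^{-\lambda c} \;=\; \sum_{\mathbf{m}\geq 0}(\lambda c)_{\mathbf{m}}\, \mathbf{K}_{\mathbf{m}}(z,z).$$
Because every $\mathbf{K}_{\mathbf{m}}(z,z)$ is $\mathcal{K}(\Omega)$-invariant and monotone on the spectral diagonal $\tilde z=(t_1,\dots,t_r,0,\dots,0)$, it suffices to control the series as $\tilde z$ approaches a maximal tripotent $e\in \mathcal{U}$. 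So part~(i) reduces to deciding finiteness of $\sum_{\mathbf{m}} \tfrac{|(\mu)_{\mathbf{m}}|^2}{((1-\beta)p)_{\mathbf{m}}}\, \mathbf{K}_{\mathbf{m}}(e,e)$, and part~(ii) reduces to showing the coefficients of the two series agree up to bounded factors.

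Next, apply Stirling to the Gindikin-Pochhammer symbols. Writing $(\nu)_{\mathbf{m}}=\prod_{j=1}^{r}\bigl(\nu-(j-1)\tfrac{\mathbf{a}}{2}\bigr)_{m_j}$ and using $(a)_m\sim \Gamma(a)^{-1}\,m!\,m^{a-1}$, together with Faraut-Koranyi's identity $\mathbf{K}_{\mathbf{m}}(e,e)=d_{\mathbf{m}}/(N/r)_{\mathbf{m}}$ and the hook-length formula for the dimension $d_{\mathbf{m}}$, a direct computation shows that for $\min_j m_j\to\infty$
$$\frac{|(\mu)_{\mathbf{m}}|^2}{((1-\beta)p)_{\mathbf{m}}}\, \mathbf{K}_{\mathbf{m}}(e,e) \;\asymp\; \prod_{j=1}^{r} m_j^{\,pc+(j-1)\mathbf{a}/2-1},$$
with constants depending only on $\beta$ and the fixed Gindikin factors. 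The same analysis gives an identical asymptotic (up to a bounded multiplicative constant) for $(\lambda c)_{\mathbf{m}}\, \mathbf{K}_{\mathbf{m}}(e,e)$. Thus the two series have matching general terms.

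From here, part~(i) follows by summing in the Weyl chamber $m_1\geq\cdots\geq m_r\geq 0$: the direction of worst growth is $j=r$, whose exponent $pc+(r-1)\mathbf{a}/2-1$ must lie below $-1$ for absolute convergence (and cannot be helped by the remaining indices, since they are constrained to be larger). This yields the threshold $c<-(r-1)\mathbf{a}/(2p)$ in one direction, while the converse failure of boundedness, when $c$ meets or exceeds this threshold, is witnessed by extracting the sub-series along the rank-$r$ Shilov approach $\tilde z = te$, $t\to 1^-$, where the tail explicitly diverges. Part~(ii) follows by splitting each of the two series into a finite head $|\mathbf{m}|\leq M$ (bounded as $z\to\mathcal{U}$) and a dominant tail whose common Stirling rate cancels in the ratio; letting $M\to\infty$ after $z\to \mathcal{U}$ gives $J_{\beta,c}(z)/K(z)^c\to L\in(0,\infty)$, hence $J_{\beta,c}\sim K^c$. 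Away from the characteristic boundary, both quantities are bounded above and below by positive constants, so the equivalence is global.

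The main obstacle is the anisotropic multi-index Stirling bookkeeping. For $r=1$ (the ball) the thresholds collapse to $c<0$ and $c>0$ and one recovers the classical Forelli-Rudin dichotomy, but for $r\geq 2$ the different shifts $(j-1)\mathbf{a}/2$ introduced by $\Gamma_\Omega$ make the convergence criterion depend on the \emph{largest} index $j=r$, which is precisely what slides the threshold to $\pm(r-1)\mathbf{a}/(2p)$. Verifying rigorously that (a) the Stirling estimate for $\mathbf{K}_{\mathbf{m}}(e,e)$ contributes exactly the extra $m_j^{(j-1)\mathbf{a}/2}$ factor, (b) the sum over the Weyl chamber with constraint $m_1\geq\cdots\geq m_r$ has its convergence dictated by the $j=r$ exponent, and (c) no lower-rank boundary stratum of $\Omega$ produces a sharper obstruction, is the technical heart of the argument and is where one must lean on Theorem~4.1 of~\cite{F-K90}.
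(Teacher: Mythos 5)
The paper does not prove this statement: it is imported wholesale from Faraut--Kor\'anyi \cite{F-K90} (their Theorem~4.1 and its consequences), and the only supporting material in Section~5 is the spherical-series expansion of $J_{\beta,c}$ that you also take as your starting point. Your sketch therefore follows the same route as the cited source---expansion in the $\mathbf{K}_{\mathbf{m}}$, reduction to the maximal tripotent via $\mathbf{K}_{\mathbf{m}}(z,z)\le \mathbf{K}_{\mathbf{m}}(e,e)=d_{\mathbf{m}}/(N/r)_{\mathbf{m}}$, and Stirling asymptotics of the Gindikin--Pochhammer symbols---but as a standalone argument it has genuine gaps. The most serious is structural: at the decisive step you say one ``must lean on Theorem~4.1 of \cite{F-K90},'' yet the statement to be proved \emph{is} that theorem in the present paper's normalization, so the proposal is circular exactly where the work lies.

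The technical assertion is also not right as stated. Carrying out the Stirling computation, the factorials and the shifts $(j-1)\mathbf{a}/2$ cancel between $|(\mu)_{\mathbf{m}}|^2/((1-\beta)p)_{\mathbf{m}}$ and $(N/r)_{\mathbf{m}}$, leaving a general term of the shape $C\,d_{\mathbf{m}}\prod_j m_j^{\,pc-N/r}$ rather than $\prod_j m_j^{\,pc+(j-1)\mathbf{a}/2-1}$; for $r\ge 2$ and $\mathbf{a}>0$ the dimension $d_{\mathbf{m}}$ depends on the \emph{differences} $m_i-m_j$ through the Weyl/hook formula and is not a product of powers of the individual $m_j$, so your formula cannot hold uniformly over the chamber. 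In fact the critical stratum for divergence is not ``the $j=r$ direction'' but the partitions $\mathbf{m}=(m,0,\dots,0)$, where $d_{\mathbf{m}}\asymp m^{(r-1)\mathbf{a}+b}$ makes the term $\asymp m^{\,pc+(r-1)\mathbf{a}/2-1}$ and recovers the threshold $c<-(r-1)\mathbf{a}/(2p)$; your heuristic lands on the right exponent only because the error in the term asymptotic happens to reproduce it in the $j=r$ factor. Part~(ii) has the additional unaddressed issues that the two coefficient sequences must be compared two-sidedly for \emph{all} $\mathbf{m}$ (not just $\min_j m_j\to\infty$), that positivity of $(\lambda c)_{\mathbf{m}}$ must be checked (it holds since $c>0$ there), and that the head/tail splitting must be justified along every face of $\partial\Omega$, not only the Shilov approach. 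If the intent is to defer to \cite{F-K90}, that is what the paper does; if the intent is to reprove the result, the anisotropic summation over the Weyl chamber must be carried out in full.
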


When $|c|\le (r-1){\bf a}/(2p)$, it is difficult to compute $J_{\beta,c}(z)$ (see \cite{Kor, Yan90}).
 Theorem 1 of \cite{E-Z04}, whose parameters are chosen as ${p\over 2}(1+c-\beta), {p\over 2}(1+c-\beta)$ and $ p(1-\beta)$, is stated as follows.

\begin{theorem} \label{Thm-LZ} Let $\Omega$ be a  Cartan classical domain of rank $2$ with characteristic boundary ${\mathcal U}$. Then for any $z= t e_1+ Te_2$ with $0\le t\le T<1$ and $e_1, e_2\in {\mathcal U}$ the following statements hold:
\begin{enumerate} [label=(\roman*)]

\item if $2pc={\bf a}$, then $ J_{\beta, c}( z) \approx (1-t)^{-{{\bf a} \over 2}} (1-T)^{-{{\bf a} \over 2}} [1-\log(1-t)]$;

\item if $0<2pc<{\bf a}$, then $J_{\beta, c}\approx (1-t)^{-{{\bf a} \over 2}} (1-T)^{-pc}$;

\item if $c=0$, then $J_{\beta, c}(z)\approx (1-t)^{-{{\bf a} \over 2}} [1+\log{1-t\over 1-T}]$;

\item if $-{\bf a}<2pc<0$, then $J_{\beta, c} (z)\approx (1-t)^{-pc-{{\bf a} \over 2}}$;

\item if $2pc =-{\bf a}$, then $J_{\beta,c}(z)\approx 1- \log (1-t)$.

\end{enumerate}
\end{theorem}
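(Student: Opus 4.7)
My plan is to expand $J_{\beta,c}(z)$ directly using the Faraut--Koranyi series recalled in the excerpt,
$$
J_{\beta,c}(z)=\sum_{{\bf m}\ge 0}\frac{|(\mu)_{\bf m}|^{2}}{((1-\beta)p)_{\bf m}}\,K_{\bf m}(z,z),\qquad \mu=\tfrac{p}{2}(1+c-\beta),
$$
and, in rank $2$ (so ${\bf m}=(m_1,m_2)$ with $m_1\ge m_2\ge 0$), carry out a term-by-term asymptotic analysis of the resulting double sum as $z=te_1+Te_2$ approaches the characteristic boundary. The combinatorial simplification is that $t\le T$ forces the peak of the integrand to lie at $m_1\sim(1-T)^{-1}\ge(1-t)^{-1}\sim m_2$, so the constraint $m_1\ge m_2$ is automatically active and the double sum effectively factors into two one-dimensional tails.

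The two key ingredients are a coefficient asymptotic and a spherical-polynomial estimate. For the coefficient, the rank-$2$ factorization $(x)_{(m_1,m_2)}=(x)_{m_1}(x-{\bf a}/2)_{m_2}$, Stirling's formula, and the identity $2\mu-(1-\beta)p=pc$ yield
$$
\frac{|(\mu)_{\bf m}|^{2}}{((1-\beta)p)_{\bf m}}\approx m_1!\,m_2!\,(1+m_1)^{pc-1}(1+m_2)^{pc-{\bf a}/2-1}.
$$
For the spherical polynomial, on the Jordan frame the generic Jordan determinant factors as $h(z,z)=(1-t^2)(1-T^2)$, so the Faraut--Koranyi generating identity
$$
\sum_{{\bf m}\ge 0}(\lambda)_{\bf m}\,K_{\bf m}(t,T)=(1-t^2)^{-\lambda}(1-T^2)^{-\lambda},\qquad \lambda=pk,
$$
combined with the terminating ${}_2F_1$-representation of the rank-$2$ Jack polynomial, gives for $t\le T$ a two-sided bound
$$
K_{(m_1,m_2)}(t,T)\approx \frac{D_{(m_1,m_2)}}{m_1!\,m_2!}\,T^{2m_1}\,t^{2m_2},
$$
where the combinatorial prefactor $D_{(m_1,m_2)}$ has polynomial orders in $m_1,m_2$ dictated by the Pochhammer ratio $(\lambda)_{m_2}/(\lambda-{\bf a}/2)_{m_2}$ obtained from coefficient-matching against the generating identity.

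Substituting these two estimates, replacing the sum by an integral via Euler--Maclaurin, and changing variables $u=2m_1\log(1/T)$, $v=2m_2\log(1/t)$ (with $\log(1/T)\approx 1-T$, $\log(1/t)\approx 1-t$), $J_{\beta,c}(z)$ reduces to a product of two incomplete-gamma-type integrals whose asymptotic behaviour is controlled by the signs of the effective exponents of $m_1$ and $m_2$. The five cases (i)--(v) then correspond exactly to sign patterns: a positive exponent produces a power-law blow-up in $(1-T)$ or $(1-t)$; a negative one gives a bounded contribution; the threshold exponent $0$, occurring precisely at $2pc=\pm{\bf a}$ and at $c=0$ (where the $m_2$-peak saturates the constraint $m_2\le m_1$), generates the logarithmic factors in (i), (iii), (v). \emph{The main obstacle} is pinning down $D_{(m_1,m_2)}$ to the correct polynomial order in $m_1$ and $m_2$, since those orders are precisely what place the critical thresholds at $2pc=\pm{\bf a}$; this requires a careful analysis of the rank-$2$ Jack polynomial on the diagonal via its hypergeometric representation rather than a naive leading-monomial bound. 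Once $D_{(m_1,m_2)}$ is identified, the remainder is standard one-variable asymptotics of truncated gamma integrals.
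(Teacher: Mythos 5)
Theorem~\ref{Thm-LZ} is not proved in this paper: it is quoted, after the parameter specialization noted just before its statement, from Theorem~1 of Engli\v{s}--Zhang \cite{E-Z04}. There is therefore no in-paper argument to compare against, and your outline has to stand on its own as a replacement for the cited result.

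Part of it does check out. In rank $2$, $(x)_{(m_1,m_2)}=(x)_{m_1}(x-\mathbf a/2)_{m_2}$, and Stirling together with $2\mu-(1-\beta)p=pc$ indeed gives
\[
\frac{|(\mu)_{\bf m}|^2}{((1-\beta)p)_{\bf m}}\approx m_1!\,m_2!\,(1+m_1)^{pc-1}(1+m_2)^{pc-\mathbf a/2-1}.
\]
The problem is the spherical-polynomial step, which you flag as the main obstacle but also set up in a way that cannot work. The ansatz $K_{(m_1,m_2)}(t,T)\approx\frac{D_{(m_1,m_2)}}{m_1!\,m_2!}T^{2m_1}t^{2m_2}$ as a \emph{two-sided} bound, with $D_{(m_1,m_2)}$ of a single polynomial order dictated by $(\lambda)_{m_2}/(\lambda-\mathbf a/2)_{m_2}\sim m_2^{\mathbf a/2}$, fails uniformly in $0\le t\le T<1$: the rank-$2$ spherical polynomial is a sum of roughly $m_1-m_2+1$ monomials $T^{2j}t^{2(m_1+m_2-j)}$ with coefficients varying nontrivially in the Jack parameter $2/\mathbf a$, and near $t\approx T$ many are comparable, so a leading-monomial bound loses a factor that depends on $m_1-m_2$. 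This is not a minor constant; it changes the asymptotics. Concretely, in case~(iv) ($-\mathbf a<2pc<0$) the theorem asserts a $T$-independent blow-up $(1-t)^{-pc-\mathbf a/2}\to\infty$. Inserting $D_{(m_1,m_2)}\sim(1+m_2)^{\mathbf a/2}$ into your term $(1+m_1)^{pc-1}(1+m_2)^{pc-\mathbf a/2-1}D_{(m_1,m_2)}T^{2m_1}t^{2m_2}$, summing first in $m_1\ge m_2$ (the tail converges since $pc<0$ and is $\lesssim(1+m_2)^{pc}$), and then in $m_2$ (the sum $\sum(1+m_2)^{2pc-1}t^{2m_2}$ has exponent $<-1$) yields a bounded quantity, not a blow-up. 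A similar mismatch occurs in case~(ii), where your scheme produces a $t$-exponent of $-pc$ instead of $-\mathbf a/2$, which is strictly too small. So the heart of the matter --- sharp two-sided asymptotics for the rank-$2$ spherical polynomial on the diagonal, uniform in the partition and in both eigenvalues, including the transition region $t\approx T$ --- is genuinely missing, and a single-top-monomial heuristic is not an adequate substitute. Establishing it requires the full terminating ${}_2F_1$ structure (or equivalent), which is precisely what \cite{E-Z04} provides.
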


As a consequence, when $\Omega$ is a Cartan classical domain of rank $2$ and $z=t e_1+ t e_2$ with $0\le t <1$ and $e_i\in {\mathcal U}$, one has
$$
\int_\Omega |K(z, w)| dv_w\approx (1-t)^{-{{\bf a} \over 2}}\approx \delta_\Omega(z)^{-{{\bf a} \over 2}}.
$$

On the Cartan classical domains, we impose a stronger
assumption on $f$ to get bounded solutions to $\bar \partial u=f$. The following result provides 
a partial answer to the problems raised by Henkin and Leiterer \cite{HL84} and Sergeev \cite{Se94}.

\begin{theorem} \label{uniform classical} Let $\Omega$ be a Cartan classical domain and  $\alpha>1+\frac{(r-1){\bf a}}{2p}$.  Then, there exists a constant $C$
 such that for any $\dbar$-closed $(0, 1)$-form $f$, the canonical solution $u$ to $\dbar u = f$ satisfies
\begin{equation} \label{RHS}
\|u\|_{\infty} \le C \Big \|   \int _{\Omega}|f|^2_g(z)  |K(z, \cdot)|^{\alpha}dv_z  \Big \|^{1 \over 2}_{\infty} + C \|f\|_{g, \infty}.
\end{equation}
\end{theorem}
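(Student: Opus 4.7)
My plan is to parallel the proof of the Key Estimate (Theorem \ref{key}), but with a new plurisubharmonic weight $\psi_a$ tailored to produce the quantity $F(a) := \int_\Omega |f|_g^2(z)|K(z,a)|^\alpha\,dv_z$ on the right-hand side of the Donnelly--Fefferman estimate, so that Theorem \ref{Thm-F-K}(i) will supply the sharp integrability threshold that fixes the hypothesis on $\alpha$. I would fix $a\in\Omega$ and a sufficiently small $\epsilon>0$, choose $\gamma>0$ small enough that $\phi(z):=\gamma\log K(z)$ satisfies Lemma \ref{finite}, so that $|f|^2_{i\partial\bar\partial\phi}=\gamma^{-1}|f|_g^2$ and $\|\partial\phi\|^2_{i\partial\bar\partial\phi}\le \tfrac12$, and then set
$$
\psi_a(z):=-\alpha\log|K(z,a)|, \qquad \phi_0:=\psi_a+\tfrac{\phi}{2}.
$$
Letting $u_0$ denote the $L^2(\Omega,\phi_0)$-minimal solution of $\bar\partial u_0=f$, Theorem \ref{DF} then yields
$$
\int_\Omega |u_0|^2 e^{-\psi_a}\,dv \,\le\, 4\int_\Omega |f|^2_{i\partial\bar\partial\phi}\,e^{-\psi_a}\,dv \,=\, \frac{4}{\gamma}\,F(a).
$$

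Setting $\beta:=\chi_{B_a(\epsilon)}\,u/|u|$, the standard orthogonality (using $u-u_0\in A^2(\Omega)$, $u\perp A^2(\Omega)$ and $P(\beta)\in A^2(\Omega)$) would give, exactly as in Theorem \ref{key},
$$
\int_{B_a(\epsilon)}|u|\,dv \,=\, \int_\Omega u_0\bar\beta\,dv - \int_\Omega u_0\,\overline{P(\beta)}\,dv.
$$
Applying Cauchy--Schwarz to each piece, and using Proposition \ref{similar} (which gives $|K(z,a)|\approx K(a)\approx v(B_a(\epsilon))^{-1}$ on $B_a(\epsilon)$) together with $\alpha\ge 1$ and $v(B_a(\epsilon))\le v(\Omega)$, I would obtain
$$
\Big|\int_\Omega u_0\bar\beta\,dv\Big|^2 \,\le\, \frac{4F(a)}{\gamma}\!\int_{B_a(\epsilon)}\!|K(z,a)|^{-\alpha}\,dv \,\le\, C\,F(a)\,v(B_a(\epsilon))^{1+\alpha} \,\le\, C\,F(a)\,v(B_a(\epsilon))^2,
$$
and, using Lemma \ref{bergman bound} to get $|P(\beta)(z)|\le C\,v(B_a(\epsilon))|K(z,a)|$,
$$
\Big|\int_\Omega u_0\overline{P(\beta)}\,dv\Big|^2 \,\le\, \frac{4F(a)}{\gamma}\,C\,v(B_a(\epsilon))^2 \int_\Omega |K(z,a)|^{2-\alpha}\,dv.
$$

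The decisive new input enters here: $\int_\Omega |K(z,a)|^{2-\alpha}\,dv_z=J_{0,\,1-\alpha}(a)$, and Theorem \ref{Thm-F-K}(i) guarantees this is bounded uniformly in $a\in\Omega$ precisely when $1-\alpha<-(r-1){\bf a}/(2p)$, that is, under the hypothesis $\alpha>1+(r-1){\bf a}/(2p)$. Combining the two displayed bounds and dividing by $v(B_a(\epsilon))$ gives
$$
\frac{1}{v(B_a(\epsilon))}\int_{B_a(\epsilon)}|u|\,dv \,\le\, C\,F(a)^{1/2} \,\le\, C\,\|F\|_\infty^{1/2},
$$
and Proposition \ref{Cauchy--Pompeiu} then produces $|u(a)|\le C\|F\|_\infty^{1/2}$, after absorbing the local $\bar\partial$-term in the same way as in Theorems \ref{key}--\ref{uni-spsi}. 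The hardest part is identifying the sharp integrability threshold for $J_{0,\,1-\alpha}(a)$, which is precisely what Theorem \ref{Thm-F-K} supplies and which dictates the lower bound $\alpha>1+(r-1){\bf a}/(2p)$ in the hypothesis; outside this range the second Cauchy--Schwarz estimate fails to be uniform, and the weighted-$L^2$ scheme breaks down.
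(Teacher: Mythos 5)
Your proposal follows the same argument as the paper: you set $\phi=\gamma\log K$, $\psi_a=-\alpha\log|K(\cdot,a)|$, $\phi_0=\psi_a+\phi/2$, decompose $\int_{B_a(\epsilon)}|u|\,dv = \int_\Omega u_0\bar\beta\,dv - \int_\Omega u_0\overline{P(\beta)}\,dv$ exactly as in Theorem~\ref{key}, control each piece via the Donnelly--Fefferman estimate of Lemma~\ref{L2 lemma} together with Proposition~\ref{similar} and Lemma~\ref{bergman bound}, and invoke Theorem~\ref{Thm-F-K}(i) (with $J_{0,\,1-\alpha}$) to identify the threshold $\alpha>1+\tfrac{(r-1)\mathbf{a}}{2p}$ that makes $\int_\Omega |K(z,a)|^{2-\alpha}\,dv_z$ bounded uniformly in $a$, before finishing with Proposition~\ref{Cauchy--Pompeiu}. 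This is exactly the paper's proof.
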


\begin{proof}

As in the proof of Theorem \ref{key}, let $\beta:=\chi_{B_a(\epsilon)} {u(z) \over |u(z)|},$ $\phi:=\gamma \log K(z)$, and $\psi_{a} (z):=-\alpha \log |K(z, a)|$ for any $a \in \Omega$. Then,
$$
\int_{B_a(\epsilon)}|u| dv \leq \int_\Omega |u_0 \overline{\beta} | dv +\int_\Omega |u_0 \overline{P (\beta)}| dv.
$$
By Lemma \ref{finite} and \eqref{L2DF1},
\begin{align*}
\left|\int _{\Omega}u_0 \,\bar{\beta} \, dv\,\right|
&\leq C \Big(\int_{\Omega} |f |^2_{g}(z)   |K(z,a)|^{\alpha} dv_z \Big)^{{1 \over 2}}\Big(\int_{B_a(\epsilon)} |K(z,a)|^{-\alpha} dv_z \Big)^{{1 \over 2}}
 \\
&\leq C \Big(\int_{\Omega} |f |^2_{g} (z) |K(z,a)|^{\alpha} dv_z \Big)^{{1 \over 2}}\Big( v(B_a(\epsilon))K(a)^{-\alpha}\Big{)}^{{1 \over 2}}\\
&\leq C  v(B_a(\epsilon))^{{1+\alpha \over 2}} \Big(\int_{\Omega} |f |^2_{g} (z) |K(z,a)|^{\alpha} dv_z \Big)^{{1 \over 2}}.
\end{align*}
On the other hand, by \eqref{L2DF2},
\begin{eqnarray*}
 \Big{|}\int _{\Omega}u_0 \overline{P(\beta)} dv \Big{|} &\leq &
 C \Big(\int_{\Omega} |f |^2_{g} (z) |K(z,a)|^{\alpha} dv_z \Big)^{{1 \over 2}}v(B_a(\epsilon))  \Big(\int_\Omega \max_{w\in \overline{B_a(\epsilon)}}|K(z, w)|^2 e^{\psi_{a}} dv_z\Big)^{{1 \over 2}}
 \\ &\leq &
  C  \Big(\int_{\Omega}  |f |^2_{g} (z) |K(z,a)|^{\alpha} dv_z \Big)^{{1 \over 2}} v(B_a(\epsilon))   \Big(\int_{\Omega} |K(z,a)|^{2-\alpha}  dv_z\Big)^{{1 \over 2}}.
\end{eqnarray*}
If $\alpha> 1+\frac{(r-1){\bf a}}{2p}\ge 1$, then $|K(z,a)|^{2-\alpha}$ is integrable on $\Omega$ by Theorem \ref{Thm-F-K}. Therefore, for any $a \in \Omega$,
$$
\frac{1}{ v(B_a(\epsilon))} \int \limits_{B_a(\epsilon)}|u| dv \leq C \Big(\int_{\Omega} |f |^2_{g} (z) |K(z,a)|^{\alpha} dv_z \Big)^{{1 \over 2}}.
$$
Coupling this estimate with Proposition \ref{Cauchy--Pompeiu},  one has proved $u$ is bounded. \end{proof}

\section{Sharpness of the pointwise estimates} 

For the Cartan classical domains, we show that the logarithm of the Bergman kernel has a bounded gradient with respect to the Bergman metric, and also verify that Theorem \ref{key} is sharp.

\subsection{Solutions with logarithmic growth}

\begin{example} Let $\Omega$ be a Cartan classical domain and $u(z) = \log K(z)$.  Then $P[u](z)$ is a constant function on $\Omega$ and there exists a constant $c$ so that $|\dbar u|^2_g = c\Tr(zz^*)$.
\end{example}

\begin{proof}  Notice that for all $z \in \Omega$,
\begin{align*}
P[u](z)&= \int_\Omega  u(w) K(z,w) dv_w
\\
&= \int_\Omega{1\over 2\pi} \int_0^{2\pi}  u(e^{i\theta} w) K(z, e^{i\theta} w) d \theta dv_w
\\
&=\int_\Omega{1\over 2\pi} \int_0^{2\pi}  u( w) K(z, e^{i\theta} w) d \theta dv_w
\\
&=\int_\Omega  u( w) K(z, 0) dv_w
\\
&= { 1\over v(\Omega)}\int_\Omega u(w) d v_w,
\end{align*}
where the third equality follows by the transformation rule of the Bergman kernel, and the fourth equality follows by the mean-value property of (anti-)holomorphic functions.

\bigskip
Now, we show the second part of the example. For $z \in M_{(m, n)}(\mathbb{C})$, define $V(z) := I_m - zz^{*}$ and let $V_{uv}$ denote the $(u, v)$ entry of $V$. Then, by \cite{Hua63, Lu97} (c.f. \cite[Proposition 2.1]{Ch-L19}), for domains of type I, II and III,
\[
g^{j\alpha, \overline{k\beta}}(z) = \begin{cases}
V_{jk}(\delta_{\alpha \beta} - \sum_{l=1}^m z_{l\alpha}\overline{z}_{l\beta}), & z \in \hbox{I}(m , n);\\
V_{jk}{V_{\alpha \beta} \over (2 - {\delta_{j\alpha} })(2 - {\delta_{k\beta} })}, & z \in \hbox{II}(n);\\
{ {1 \over 4}V_{jk}V_{\alpha \beta}(1 - \delta_{j\alpha})(1 - \delta_{k\beta})}, & z \in \hbox{III}(n).
\end{cases}
\] 
For matrices $E_{j \alpha} := (\delta_{j u}\delta_{\alpha v})_{u, v}, A := (a_{uv})_{u, v} \in M_{(n, m)}(\mathbb{C})$, it holds that
$$
E_{j \alpha }A = (\delta_{j u} a_{\alpha v})_{u, v} \hbox{ \quad and  \quad } {\partial V \over \partial z_{j\alpha}} = -E_{j\alpha}z^*.
$$
Then for $z \in \hbox{I}(m, n)$,
\begin{align*}
{\partial \log \det V(z) \over \partial z_{j\alpha}}=& \Tr(V^{-1}(z) {\partial V(z) \over \partial z_{j\alpha}}) = - \Tr(V^{-1}(z)E_{j\alpha}z^{*}) \\ =& -\Tr(E_{j\alpha} z^{*} V^{-1}(z)) = - \sum_{u} \delta_{ju}[z^{*}V^{-1}]_{\alpha u} = -[z^{*}V^{-1}]_{\alpha j}. 
\end{align*}
Since $u(z) = \log (\det (V(z)))^{-(m + n)}-\log  v$(I(m, n)) is real-valued,
\begin{align*}
|\overline{\partial}u|^2_g(z)
&= \sum_{j, \beta, k, \alpha} g^{j\alpha, \overline{k\beta}} {\partial u \over \partial z_{j\alpha}} \overline{{\partial u \over \partial z_{k\beta}}}= (m + n)^2 \sum_{j, \beta, k, \alpha}V_{jk}[I - 
z^{\tau}\overline{z}]_{\alpha\beta}[z^{*}V^{-1}(z)]_{\alpha j}
\overline{[z^{*}V^{-1}]}_{\beta k} \\
&= (m + n)^2 \sum_{\alpha, k}[z^{*}]_{\alpha k}[(I - z^{\tau}
\overline{z})z^{\tau}\overline{V^{-1}}]_{\alpha k} = (m + n)^2 \sum_{\alpha, k}[z^{*}]_{\alpha k}[z^{\tau}]_{\alpha k} = 
(m + n)^2 \Tr(zz^{*}).
\end{align*}
For $z \in $II$(n)$, using the symmetry of $z$, we know
$$
{\partial V(z) \over \partial z_{j\alpha}} = -(1 - {\delta_{j\alpha} \over 2})(E_{j\alpha} + E_{\alpha j})z^{*} \hbox{\quad   and  \quad } z^{*}V^{-1}(z) = (z^{*}V^{-1}(z))^{\tau}.
$$
Hence,
\begin{align*}
{\partial \log \det V(z) \over \partial z_{j\alpha}} =& \Tr(V^{-1}(z){\partial V \over \partial z_{j\alpha}}(z))  = -(1 - {\delta_{j\alpha} \over 2})\Tr(E_{j\alpha}z^{*}V^{-1}(z) + z^{*}V^{-1}(z)E_{\alpha j}) \\ 
=& -(2 - \delta_{j\alpha})\Tr(E_{j\alpha}z^{*}V^{-1}(z))  = -(2 - \delta_{j\alpha})[z^{*}V^{-1}(z)]_{j \alpha}.
\end{align*}
Since $u(z) = \log (\det (V(z)))^{-(n + 1)}-\log  v$(II(n)), and for $z$ symmetric $\overline{z^*V(z)^{-1}} = V(z)^{-1}z$,
\begin{align*}
|\overline{\partial}u|_g^2(z)
&= \sum_{j, \beta, k, \alpha} g^{j\alpha, \overline{k\beta}} {\partial u \over \partial z_{j\alpha}} \overline{{\partial u \over \partial z_{k\beta}}}
\\
&= (n+1)^2 \sum_{\alpha, \beta, k, j} V_{jk}V_{\alpha \beta}[z^{*}V^{-1}(z)]_{j \alpha}[\overline{z^{*}V^{-1}(z)}]_{k\beta}
\\
&=
 (n+1)^2 \sum_{j, \beta} [z^{*}]_{j\beta}[V(z)V^{-1}(z)z]_{j\beta} 
=
 (n+1)^2 \Tr(zz^{*}).
\end{align*}
The proof for skew-symmetric $z \in $ III(n) is similar to the preceding proofs. 
\newline
\par
For a Cartan classical domain IV$(n)$, let $s(z) := \sum z_j^2$ and $r(z): = 1 - 2|z|^2 + |s(z)|^2$ for $z \in \mathbb{C}^n$. By \cite{Hua63}, the Bergman kernel $K(z,z)=c r(z)^{-n}$. Also,
$$
g^{j, \overline{k}}(z) = r(z)(\delta_{jk} - 2z_j\overline{z}_k) + 2(\overline{z_j} - \overline{s(z)}z_j)(z_k - s(z)\overline{z_k}).
$$
Notice that
$$
 (\log (r(z)^{-n}))_{z_j} (\log (r(z)^{-n}))_{z_{\bar{k}}} = {4n^2 \over r(z)^2}[z_js(\overline{z}) - \overline{z}_j][\overline{z}_ks(z) - z_k],
$$
\begin{align*}
|\dbar u|_g^2(z)&=4n^2\sum_{j, k = 1}^n [r(z)(\delta_{jk} - 2z_j\overline{z}_k)+ 2(\overline{z}_j - s(\overline{z})z_j)(z_k - s(z)\overline{z}_k)]{(\overline{z}_j - s(\overline{z})z_j)(z_k - s(z)\overline{z}_k)\over r(z)^2}\\
 &= \sum_{j,k  = 1}^n {4n^2\over r(z) }(\delta_{jk} - 2z_j\overline{z_k})[\overline{z}_j - \overline{s(z)}z_j][z_k - s(z) \overline{z}_k] +  \sum_{j, k = 1}^n {8n^2(\overline{z}_j - \overline{s(z)}z_j)^2(z_k - s(z)\overline{z_k})^2\over r(z)^2}\\
& =:F(z)+G(z).
\end{align*}
Thus,
\begin{align*}
F(z) {r \over 4n^2}
&= \sum_{j = 1}^n |z_j|^2 - s \overline{z_j}^2 - z_j^2 \overline{s} + |s|^2|z_j|^2 -2 \sum_{j,k = 1}^n |z_j|^2|z_k|^2 - s|z_j|^2\overline{z_k}^2 - z_j^2 |z_k|^2\overline{s} + |s|^2z_j^2\overline{z_k}^2 \\
&= |z|^2 - 2|s|^2   + |s|^2|z|^2- 2(|z|^4 - s|z|^2\overline{s} - s|z|^2\overline{s} + |s|^2s\overline{s}) \\
&=-2|z|^4 + 5|s|^2|z|^2 - 2|s|^2 + |z|^2 - 2|s|^4,
\end{align*}

$$
G(z) =  {8n^2 \over r^2} |\sum_{j=1}^n (z_j - s\overline{z_j})^2 |^2 = {8n^2 \over r^2} |s - 2s|z|^2 + s^2\overline{s}|^2 = 8n^2|s|^2.
$$
Therefore,
\begin{align*}
|\dbar u|_g^2 &=  {4n^2 \over r}[-2|z|^4 + 5|s|^2|z|^2 - 2|s|^2 + |z|^2 - 2|s|^4] + {4n^2 \over r}2|s(z)|^2r(z) \\
&= {4n^2 \over r}[-2|z|^4 + |z|^2|s|^2 + |z|^2] = 4n^2|z|^2=4n^2 \Tr(z z^*). 
\end{align*}
\end{proof}

Example 2 below shows that the canonical solution to the equation $\bar\partial u=f:=\bar \partial \log K(z)$ (here $\|f\|_{g, \infty}<\infty$) given by $\log K(z) -C_{\Omega}$ is unbounded with logarithmic growth near the boundary of the polydisc.

\begin{example} Consider
$
f(z):=-\sum_{j=1}^n {z_j    (1-|z_j|^2)^{-1}d\zbar_j}
$
defined on $\mathbb{D}^n$. Then, $f$ is $\dbar$-closed, $\|f\|_{g,\infty}\le {1\over 2}$ and the canonical solution to $\dbar u=f$ on $\mathbb{D}^n$ is
\begin{equation} \label{u-poly}
u(z):=\sum_{j=1}^n \log (1-|z_j|^2)-n \int_0^1  \log(1-r) dr.
\end{equation}
\end{example}

\begin{proof} We compute directly that $u$ given by \eqref{u-poly} satisfies $\dbar u=f$, and 
$$
|f(z)|_g^2={1\over2} \sum_{j=1}^n {(1-|z_j|^2)^2 \over (1-|z_j|^2)^2} |z_j|^2 ={|z|^2\over 2}.
$$
\begin{align*}
&P_{\mathbb D^n}\left [\sum_{j=1}^n \log (1-|w_j|^2) \right ](z) ={ 1 \over \pi^n} \int_{{\mathbb D^n}} \prod_{j=1}^n {1\over (1-\langle z_j, w_j\rangle)^2} \sum_{k=1}^n \log (1-|w_k|^2) dv_{w_1}\cdots dv_{w_n}\\
=& \sum_{k=1}^n {1\over \pi } \int_{{\mathbb D^n}}  {\log (1-|w_k|^2)\over (1-\langle  z_k , w_k \rangle)^2}  d v_{w_k} =\sum_{k=1}^n  2 \int_0^1   \log (1-r_k^2)  r_k dr_k =n  \int_0^1   \log (1-r)  dr. 
\end{align*}
\end{proof}

\subsection{A sharp example}  
The maximum blow-up order for a solution to $\dbar u = f$ with $\|f\|_{g, \infty} < \infty$ is $\int_{\Omega}|K(\cdot, w)| dv_w$. We will provide an example here to show that Theorem \ref{key} is sharp on the Cartan classical domains.

\begin{pro} \label{eg}  Let $\Omega$ be a Cartan classical domain. 
Then, there is a constant $c$ such that for each $z\in \Omega$, there is a $\dbar$-closed $(0,1)$-form $f_z$ on $\Omega$ with
$\|f_z\|_{g, \infty} = 1$ and the canonical solution to $\dbar u=f_z$ satisfies 
$$
|u(z)| \geq  {c} \int_{\Omega} |K(z, w)| dv_w.
$$ 
\end{pro}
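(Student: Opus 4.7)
The plan is to construct $f_z$ explicitly using the Bergman kernel data, saturating the upper bound in the Key Estimate (Theorem~\ref{key}). The baseline is Berndtsson's example $f=\dbar\log K$, which by the computation in Section~6.1 has bounded $g$-sup norm on every Cartan classical domain and whose canonical solution is $u=\log K - C$; on the ball $\log K(z) \asymp \int_\Omega|K(z,w)|\,dv_w$, so this single form suffices. For Cartan classical domains of rank $\ge 2$, however, $\int_\Omega|K(z,w)|\,dv_w$ can grow strictly faster than $\log K(z)$ along the ``corner'' directions appearing in Theorem~\ref{Thm-LZ} (for example $\delta_\Omega(z)^{-1/2}$ on II$(2)$), so one must allow $f_z$ to depend on $z$.

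The natural attempt is $f_z := c_z\,\dbar h_z$ with
$$h_z(w) := \Real\bigl[\,[\det(I-wz^*)]^{-\mu(z)}\bigr] \qquad (\text{types I, II, III}),$$
and the analogous expression for type IV, where $\mu(z)>0$ is tuned to the local blow-up rate of $\int_\Omega|K(z,w)|\,dv_w$ at $z$, and $c_z$ normalizes $\|f_z\|_{g,\infty}=1$. Writing $h_z=\tfrac{1}{2}(g_z+\overline{g_z})$ with $g_z(w):=[\det(I-wz^*)]^{-\mu(z)}$ holomorphic in $w$, the canonical solution $u_z=c_z(h_z-P[h_z])$ satisfies
$$u_z(z) = \tfrac{c_z}{2}\bigl(g_z(z) - P[\overline{g_z}](z)\bigr).$$
Here $g_z(z)=[\det(I-zz^*)]^{-\mu(z)}$ is explicit, and the projection $P[\overline{g_z}](z)$ reduces (after series expansion of $\overline{g_z}$ in the Schur-type basis $K_{\mathbf{m}}$) to a Faraut-Koranyi integral of the form $J_{\beta,c(\mu)}(z)$ analyzed in Theorems~\ref{Thm-F-K} and~\ref{Thm-LZ}. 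For $\mu(z)$ in the appropriate range this projection is of strictly lower order than $g_z(z)$, so non-cancellation holds and $|u_z(z)| \gtrsim c_z\,[\det(I-zz^*)]^{-\mu(z)}$.

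The main obstacle is verifying $\|\dbar h_z\|_{g,\infty} \le 1/c_z$ in a way that simultaneously forces $c_z\,[\det(I-zz^*)]^{-\mu(z)} \gtrsim \int_\Omega|K(z,w)|\,dv_w$. A direct computation gives
$$\dbar_w[\det(I-zw^*)]^{-\mu} = \mu\,[\det(I-zw^*)]^{-\mu}\sum_{k,\beta}[(I-zw^*)^{-1}z]_{k\beta}\,d\bar w_{k\beta},$$
and combining this with the Bergman metric formulas for $g^{j\alpha,\overline{k\beta}}$ from Section~6.1 produces $|\dbar h_z|^2_g(w)$ as $[\det(I-zw^*)]^{-2\mu}$ times a quadratic form in $(I-zw^*)^{-1}z$ paired against $g^{-1}$. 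The key estimate needed is a quantitative generalization of the ball identity, $|\det(I-zw^*)|^2 \ge \det(I-ww^*)\det(I-zz^*)$, which converts the $w$-dependent blow-up into a $z$-dependent factor that is absorbed into $c_z$. The admissible window for $\mu(z)$ is exactly determined by the threshold $(r-1)\mathbf{a}/(2p)$ of Theorem~\ref{Thm-F-K}, and within that window Theorem~\ref{Thm-LZ} supplies the matching between $c_z\,[\det(I-zz^*)]^{-\mu(z)}$ and $\int_\Omega|K(z,w)|\,dv_w$; the final non-cancellation check places the relevant $J_{\beta,c(\mu)}(z)$ in case~(iv) or (v) of Theorem~\ref{Thm-LZ}, completing the argument.
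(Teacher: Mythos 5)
Your construction is a genuinely different route from the paper's, and as written it does not close; there are real gaps that your own phrasing (``the main obstacle is verifying...'', ``the key estimate needed is...'') already flags.

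The paper's construction is $U_z(w):=K(w)^{-1}K(w,z)$, and the point is precisely the opposite of your plan: $U_z(z)=1$ is \emph{bounded}, and the unboundedness of the canonical solution comes entirely from the Bergman projection, since
\[
P[U_z](z)=\int_\Omega K(w)^{-1}\lvert K(w,z)\rvert^2\,dv_w=J_{-1,0}(z)\approx\int_\Omega\lvert K(z,w)\rvert\,dv_w
\]
by the observation $J_{\beta,0}\approx J_{0,0}$ for $\beta<1/p$. The $g$-sup norm of $f_z=\bar\partial U_z=K(\cdot,z)\,\bar\partial(K(\cdot)^{-1})$ is then bounded in one line, using that $\lvert K(\cdot,z)\rvert K(\cdot)^{-1}$ is uniformly bounded on $\Omega\times\Omega$ and that $\|\bar\partial\log K\|_{g,\infty}$ is a constant (the computation in Section~6.1). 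No $z$-dependent exponent, no non-cancellation lemma, no reduction of $P$ to a Schur-basis series is needed.

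Your plan instead makes $h_z(z)=(\det(I-zz^*))^{-\mu(z)}$ itself blow up while demanding $\|\bar\partial h_z\|_{g,\infty}\le c_z^{-1}$, and then needs $c_z(\det(I-zz^*))^{-\mu(z)}\gtrsim\int_\Omega\lvert K(z,w)\rvert\,dv_w$. These two requirements pull in opposite directions: enlarging $\mu(z)$ makes $h_z(z)$ bigger but also makes $\bar\partial h_z$ wilder and so forces $c_z$ smaller. Whether the two rates exactly balance to produce $\int_\Omega\lvert K(z,w)\rvert\,dv_w$ is precisely what you would have to prove, and the pointwise bound on $\lvert\bar\partial h_z\rvert_g(w)$ uniformly over $w\in\Omega$ (not just near $z$) is nontrivial: the Bergman metric lower bound only controls tangential directions with different weights, and the determinant inequality you invoke controls one $w$-dependent factor, not the full quadratic form. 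You would also need to verify that the projection term $P[\overline{g_z}](z)$, which unlike in the paper is genuinely a Faraut--Koranyi integral with a nonzero exponent $c(\mu)$ depending on $z$, really does fall in cases (iv)--(v) of Theorem~\ref{Thm-LZ} and is of strictly lower order; this is asserted but not checked. In short, your approach is more ambitious (a $z$-tuned exponent) where the paper is deliberately rigid (exponent fixed at $-1$ on $K$, $+1$ on $K(\cdot,z)$), and the extra freedom is exactly what makes the estimates fail to close without substantial further work.
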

 
\begin{proof} 
For any point $z \in \Omega$, consider the function $U_z(\cdot):=K(\cdot)^{-1} K(\cdot, z)$ and
$$
f_z(\cdot) :=\bar \partial U_z(\cdot) =K(\cdot, z) \bar \partial (K(\cdot)^{-1}).
$$
Then, by Example 1, 
\begin{eqnarray*}
\| f_z\|_{g, \infty} &=& \|K(\cdot, z)  K(\cdot)^{-2} \bar \partial (K(\cdot))\|_{g, \infty} \\
&=&\| |K(\cdot, z)K(\cdot)^{-1}  \bar \partial (\log K(\cdot)) \|_{g, \infty} \\
&\le&  \|K(\cdot, z)K(\cdot)^{-1} \|_\infty \|   \bar \partial (\log K(\cdot))| \|_{g, \infty} \\
&\le& C.
\end{eqnarray*}
The Bergman projection of $U_z$ is 
$$
P[U_z] (\cdot) =  \int_{\Omega} U_z (w) K(\cdot, w) dv_w=  \int_{\Omega} K(w)^{-1} K(w, z) K(\cdot, w) dv_w.
$$
In particular, by \eqref{J_0} with $\beta=-1$  
\begin{eqnarray*}
P[U_z] (z) &= & \int_{\Omega} K(w)^{-1} K(w, z) K(z, w) dv_w\\
&=&\int_{\Omega} K(w)^{-1} |K(w, z)|^2 dv_w\\
&\approx &\int_\Omega |K(z, w)| dv_w.
\end{eqnarray*}
The canonical solution to $\bar \partial u=f$ is $u_z:=U_z-P[U_z]$ and
$$
|u_z(z)| = |1 - J_{-1, 0}(z)|\ge { c} \int_\Omega |K(z,w)| dv_w -1
$$
for a uniform constant $c>0$, independent of $z$.
\end{proof}

\subsection{Blow-up order greater than $\log$}

With the previous example and Theorem \ref{Thm-LZ} we will provide the maximum blow-up order when $\Omega$ is a Cartan classical domain of rank 2. By Theorem \ref{Thm-LZ}, for $z = te_1 + te_2$ where $e_1, e_2 \in \mathcal{U}$,
$$
\int_\Omega |K(z,w)| dv_w \approx (1-t)^{-{{\bf a} \over 2}} \approx \delta_{\Omega}(z)^{-{\bf a} \over 2}, \quad \hbox{ as } t\to 1^{-}.
$$
When $\Omega$ is IV$(n)$ with $n \geq 3$, 
$$
\int_\Omega |K(z,w)| dv_w  \approx \delta_{\Omega}(z)^{- {n \over 2} + 1}.
$$
When $\Omega$ is III(4) or III(5), 
$$
\int_\Omega |K(z,w)| dv_w  \approx \delta_{\Omega}(z)^{-2}.
$$
When $\Omega$ is I(2, n) with $n \geq 2$,
$$
\int_\Omega |K(z,w)| dv_w  \approx \delta_{\Omega}(z)^{-1}.
$$
When $\Omega$ is II(2),
$$
\int_\Omega |K(z,w)| dv_w  \approx \delta_{\Omega}(z)^{-{1 \over 2}}.
$$

\subsection*{Acknowledgements} \small
The first author sincerely thanks Professors Bo-Yong Chen and Jinhao Zhang for their suggestions and warm encouragement throughout the years.

We greatly appreciate the referees who read our original version very carefully and raised many valuable questions, which were very helpful for us when we revised our paper.

\bibliographystyle{alphaspecial}

\begingroup

\fontsize{10}{10.5}\selectfont

\fontsize{11}{11}\selectfont
 \bigskip
 
\noindent Department of Mathematics, University of Connecticut, Storrs, CT 06269-1009, USA

 \medskip
 
 \noindent dong@uconn.edu

\vspace{0.2 cm}

\noindent

  \bigskip
 
\noindent Department of Mathematics, University of California, Irvine, CA 92697-3875, USA

 \medskip
\noindent sli@math.uci.edu

 \bigskip
 
\noindent Department of Mathematics, University of California, Irvine, CA 92697-3875, USA

 \medskip
\noindent jtreuer@uci.edu
 
\end{document}